\newtheorem{theorem}{Theorem}[section]
\newtheorem{lemma}[theorem]{Lemma}
\newtheorem{proposition}[theorem]{Proposition}
\newtheorem{corollary}[theorem]{Corollary}
\theoremstyle{definition}
\newtheorem{example}[theorem]{Example}
\newtheorem{remark}[theorem]{Remark}
\numberwithin{equation}{section}
\begin{document}

\title[Exterior degree of infinite groups]{Exterior degree of infinite groups}

\author[R. Rezaei]{Rashid Rezaei}
\address{Department of Mathematics, Malayer University, Malayer, Iran} 
\email{ras$\_$rezaei@yahoo.com}

\author[F.G. Russo]{Francesco G. Russo}
\address{DIEETCAM, Universit\'a Degli Studi di Palermo, Viale Delle Scienze, Edificio 8, 90128, Palermo, Italy} 
\email{francescog.russo@yahoo.com}


\date{\today}

\keywords{Exterior degree, exterior center, centralizers.}

\subjclass[2010]{Primary 20J99; Secondary 20E10, 20P05.}

\begin{abstract}
The  exterior degree of a finite group has been introduced in [P. Niroomand and R. Rezaei, On the exterior degree of finite groups, Comm. Algebra 39 (2011), 335--343] and the present paper is devoted to study the exterior degree of infinite groups. We find some inequalities of combinatorial nature, which generalize those  of the finite case and allow us to get structural restrictions for the whole group. 
\end{abstract}

\maketitle

\section{Introduction}

The structure of a finite group may be strongly restricted, once we have information about  invariants which are related to the number of  elements  satisfying a given property. For instance,  the property of being commutative has motivated some authors to introduce the so--called \textit{commutativity degree} of a finite group $E$,  defined as the ratio
\[\mathrm{d}(E) = \frac{|\{(x, y) \in E \times E  \ | \ [x, y] = 1\}|}{|E|^2} = \frac{1}{|E|^2} \underset{x \in E} {\sum} |C_E(x)|= \frac{k(E)}{|E|},\]
where $k(E)$ is the number of the $E$--conjugacy classes $[x]_E=\{x^g \ | \ g \in E\}$ that constitute $E$. There is a  wide production on $\mathrm{d}(E)$ and its generalizations in the last decades and we recall here \cite{ahmad1, ahmad2, ahmad3, ahmad4, rashid1, rashid2}. The \textit{exterior degree} of $E$  is a more recent invariant, studied for similar purposes by the second author in \cite{n2} and with combinatorial techniques in \cite{ahmad6, n3, n5, n6}. The exterior degree of $E$  is defined as
\[\mathrm{d}^\wedge(E)=\frac{|\{(x,y)\in E \times E \ | \  x \wedge y =1_{_{E \wedge E}}\}|}{|E|^2}= \frac{1}{|E|} \sum^{k(E)}_{i=1} \frac{|C^\wedge_E(x_i)|}{|C_E(x_i)|}, \,\,\, (*)\]
where the last equality is proved in \cite[Lemma 2.2]{n2}, and one of the main results in \cite{n2} shows that \[\mathrm{d}^\wedge(E) \le \mathrm{d}(E)\] so that the commutativity degree and the exterior degree are connected.

Since there is literature for the commutativity degree  of infinite groups (see again \cite{ahmad2, ahmad3, ahmad4, rashid1, rashid2}), it is natural to look for a corresponding treatment of the exterior degree. To the best of our knowledge, this precise point  is not available in literature and has motivated us to write the present paper. One of the main difficulties is due to the fact that we need of a meaningful topological structure over infinite groups, which should agree with the discrete topology of the finite case. This forced us to look for projective limits of finite groups, where we have a good theory of measure and may proceed by analogy with the finite case in a compatible way with respect to the topology which we will consider.

Section 2 is devoted to justify the use of projective limits of finite groups in our context and we will introduce some technical notions which will be helpful in Section 3, where the main results are placed. The terminology and the notations will follow  those of \cite{ahmad1, n1, n2, n3, n4, n5, n6, rashid1, rashid2}.

\section{Preliminaries}

Eick  \cite{eick2} has recently described the low dimensional homology  of an \textit{infinite pro--$p$--group of finite coclass and central exponent} $t \ge 1$ \[C_t=\mathbb{Z}^{d_t}_p \rtimes C_{p^t}=\langle t_1,\ldots,t_{d_t} \rangle \rtimes \langle g \rangle =\langle g,t_1,\ldots,t_{d_t} \ | \  g^{p^t}=1,\]\[ g^{-1}t_1g=t^{-1}_{d_t},  g^{-1}t_ig= t_{i-1}t^{-e_i}_{d_t} \ (1<i \le d_t), [t_i,t_j]=1 \ (1\le j<i \le d_t) \rangle,\] where $p$ is a prime, $e_i=1$ if $p^{t-1}$ divides $i-1$, $e_i=0$ if $p^{t-1}$ does not divide $i-1$,  $\mathbb{Z}^{d_t}_p$ is the direct product of $d_t=p^{t-1}(p-1)$ copies of the group $\mathbb{Z}_p$ of $p$--adic integers, $C_{p^t}$ is a cyclic group of order $p^t$ acting uniserially on $\mathbb{Z}^{d_t}_p$.

The above presentation of $C_t$ can be found in \cite[Proof of Theorem 7]{eick2} (see also  \cite[\S7]{charles}). The relevance of $C_t$ is emphasized by \cite[Theorem 7.4.12, Corollary 7.4.13]{charles}, which characterize an arbitrary infinite pro--$p$--group of finite coclass to be constructed always as $C_t$.

 Before to proceed, it is good  to recall some basic notions on pro--$p$--groups from \cite{charles}. On a compact (Hausdorff) group $G$ it is possible to introduce the filter basis \[\mathcal{P} (G)=\{N=\overline{N} \triangleleft G \ | \ G/N \ \mathrm{is \ a \ finite \ }  p-\mathrm{group}\}\] and $G$ is said to be a pro--$p$--group if \[G=\lim_{N \in \mathcal{P}(G)} G/N,\] that is, if $G$ is a projective limit of finite $p$--groups (see \cite[Definitions 7.1.12, 7.2.1, 7.2.3, 7.2.4]{charles}). Here the open subgroups of $G$ are exactly those closed subgroups of $p$--power index \cite[Lemma 7.2.2]{charles}. Of course, the topology of $G$ is  the unique topology induced by $\mathcal{P}(G)$. The notion of coclass of a pro--$p$--group can be found in \cite[Definition 7.4.1]{charles} and is originally due to Leedham--Green and Newman:  a finite $p$--group of order $p^n$ and nilpotency class $c$ has \textit{coclass r} if $n-c=r$; a pro--$p$--group $G$ has coclass $r$ if there exists some $u\ge2$ such that $G/\gamma_i(G)$ has coclass $r$ for all $i\ge u$, where $\gamma_i(G)$ denotes the $i$--th term of the lower central series of $G$.

From \cite{charles}, the  \textit{rank} of  a pro--$p$--group $G$ is defined by  \[\mathrm{rk}(G)= {\underset{H= \overline{H} \leq G} {\mathrm{sup}}} d(H),\] where $d(H)$ denotes the minimal number of elements which are necessary to  generate topologically $H$.  
If $G$ is a  torsion--free pro--$p$--group, then  $l=\mathrm{rk}(G)=\mathrm{tf}(G)$ is called \textit{torsion--free rank}.  
Now we know from \cite[p.148]{eick2} (see also \cite[\S 9]{charles}) that the second homology group $H_2(G,\mathbb{Z}_p)$ (with coefficients in $\mathbb{Z}_p$) of an infinite pro--$p$--group $G$  of finite coclass  is an abelian pro--$p$--group   of the form $H_2(G,\mathbb{Z}_p)=T(G) \times F(G)$, where $T(G)$ is a finite $p$--group and $F(G)\simeq \mathbb{Z}^l_p$.  As usual in these situations, $H_2(G,\mathbb{Z}_p)$ is called \textit{Schur multiplier} of  $G$ and the following bound was proved few years ago.
\begin{theorem}[See  \cite{eick2}, Theorem A] \label{bettina} An infinite pro--p--group $G$ of finite coclass and central exponent $t$ has $\mathrm{tf} (H_2(G,\mathbb{Z}_p))=\frac{1}{2} d_t$ for all $p>2$ and $t>1$. In particular, this is true for $G=C_t$. 
\end{theorem}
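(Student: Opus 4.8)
The plan is to follow the rational strategy behind Eick's argument. Since, by hypothesis, $H_2(G,\mathbb{Z}_p)=T(G)\times F(G)$ with $T(G)$ a finite $p$--group and $F(G)\simeq\mathbb{Z}^l_p$, and since $\mathbb{Q}_p$ is flat over $\mathbb{Z}_p$, the universal coefficient theorem gives $H_2(G,\mathbb{Q}_p)\cong H_2(G,\mathbb{Z}_p)\otimes_{\mathbb{Z}_p}\mathbb{Q}_p$, so that
\[\mathrm{tf}(H_2(G,\mathbb{Z}_p))=\dim_{\mathbb{Q}_p} H_2(G,\mathbb{Q}_p).\]
It therefore suffices to compute the latter dimension. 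By \cite[Theorem 7.4.12, Corollary 7.4.13]{charles} I would first treat $G=C_t=M\rtimes\langle g\rangle$ with $M=\mathbb{Z}^{d_t}_p$ and $\langle g\rangle=C_{p^t}$; the general case of an arbitrary infinite pro--$p$--group of finite coclass and central exponent $t$ would then be handled by the same recipe applied to the extension $1\to M_0\to G\to P\to 1$ afforded by the translation subgroup $M_0\simeq\mathbb{Z}^{d_t}_p$ and the finite quotient $P=G/M_0$, once the $\mathbb{Q}_p[P]$--module $M_0\otimes_{\mathbb{Z}_p}\mathbb{Q}_p$ is understood.

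For $G=C_t$ I would run the Lyndon--Hochschild--Serre spectral sequence of $1\to M\to C_t\to C_{p^t}\to 1$ in continuous homology with $\mathbb{Q}_p$--coefficients,
\[E^2_{i,j}=H_i\bigl(C_{p^t},\,H_j(M,\mathbb{Q}_p)\bigr)\ \Longrightarrow\ H_{i+j}(C_t,\mathbb{Q}_p).\]
As $|C_{p^t}|=p^t$ is invertible in $\mathbb{Q}_p$, Maschke's theorem makes $E^2_{i,j}=0$ for all $i\ge1$, so the sequence collapses and $H_2(C_t,\mathbb{Q}_p)\cong H_2(M,\mathbb{Q}_p)_{C_{p^t}}$. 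Because $M\simeq\mathbb{Z}^{d_t}_p$, its continuous homology is the exterior algebra on $H_1(M,\mathbb{Q}_p)=M\otimes_{\mathbb{Z}_p}\mathbb{Q}_p=:V$, whence $H_2(M,\mathbb{Q}_p)\cong\Lambda^2_{\mathbb{Q}_p}V$ with $C_{p^t}$ acting through its action on $V$; and, $p^t$ being invertible, coinvariants agree with invariants. Hence
\[\dim_{\mathbb{Q}_p}H_2(C_t,\mathbb{Q}_p)=\dim_{\mathbb{Q}_p}\bigl(\Lambda^2_{\mathbb{Q}_p}V\bigr)^{C_{p^t}}.\]

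It remains to evaluate this dimension. From the displayed presentation, conjugation by $g$ acts on $M$, in the $\mathbb{Z}_p$--basis $t_1,\dots,t_{d_t}$, by a companion--type matrix whose characteristic polynomial --- the coefficients being recorded by the rule $e_i=1\iff p^{t-1}\mid i-1$ --- is the cyclotomic polynomial $\Phi_{p^t}$; equivalently, since the action is faithful and uniserial on $\mathbb{Z}^{d_t}_p$ with $d_t=\varphi(p^t)$, $V$ is forced to be, as a $\mathbb{Q}_p[C_{p^t}]$--module, the field $\mathbb{Q}_p(\zeta_{p^t})$ with $g$ acting as multiplication by a primitive $p^t$--th root of unity $\zeta$. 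Extending scalars to an algebraic closure $\overline{\mathbb{Q}}_p$, the eigenvalues of $g$ on $V\otimes\overline{\mathbb{Q}}_p$ are exactly the primitive $p^t$--th roots of unity $\zeta^a$, $a\in(\mathbb{Z}/p^t\mathbb{Z})^\times$, each with multiplicity one, so the eigenvalues of $g$ on $\Lambda^2(V\otimes\overline{\mathbb{Q}}_p)$ are the $\zeta^{a+b}$ as $\{a,b\}$ runs over the unordered pairs with $a\ne b$. Invariance means $a+b\equiv 0\pmod{p^t}$, and since $p>2$ the involution $a\mapsto-a$ of $(\mathbb{Z}/p^t\mathbb{Z})^\times$ is fixed--point--free, giving precisely $\tfrac12\varphi(p^t)=\tfrac12 d_t$ such pairs. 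As forming invariants under a finite group of invertible order commutes with field extension, $\dim_{\mathbb{Q}_p}(\Lambda^2_{\mathbb{Q}_p}V)^{C_{p^t}}=\tfrac12 d_t$, which proves the statement for $C_t$ and, granting the module identification, in general.

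The cohomological bookkeeping is routine; the two points I expect to require real care are, first, the module identifications --- verifying rigorously that the uniserial $C_{p^t}$--action on $\mathbb{Z}^{d_t}_p$ is the cyclotomic one (equivalently, that the matrix read off from the presentation has characteristic polynomial $\Phi_{p^t}$), and, for a general $G$, pinning down $M_0\otimes_{\mathbb{Z}_p}\mathbb{Q}_p$ as a $\mathbb{Q}_p[P]$--module, which is where the coclass structure theory and the central--exponent hypothesis do the essential work --- and, second, checking that all the homological input (the spectral sequence, the exterior--algebra shape of $H_*(\mathbb{Z}^{d_t}_p,-)$, the universal coefficient theorem) is legitimate in the continuous/profinite category and not merely for abstract groups. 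The hypothesis $p>2$ enters decisively in the last step, since for $p=2$ the involution $a\mapsto-a$ has fixed points and the count is different.
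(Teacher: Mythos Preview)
The paper does not supply a proof of this theorem at all: it is stated as a quotation of \cite[Theorem~A]{eick2} and used as background input, with no argument given in the present paper. So there is nothing here to compare your proposal against.

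That said, your sketch is a sound reconstruction of the rational computation for $G=C_t$. The key steps --- passing to $\mathbb{Q}_p$--coefficients to kill torsion, collapsing the Lyndon--Hochschild--Serre spectral sequence because $p^t\in\mathbb{Q}_p^\times$ makes $\mathbb{Q}_p[C_{p^t}]$ semisimple, identifying $H_2(\mathbb{Z}_p^{d_t},\mathbb{Q}_p)$ with $\Lambda^2 V$, and counting the $g$--fixed lines in $\Lambda^2 V$ via the cyclotomic eigenvalue set --- are all legitimate in the continuous category and yield $\tfrac12 d_t$ when $p>2$. You correctly flag the two genuine obligations: verifying that the matrix read off from the presentation really has characteristic polynomial $\Phi_{p^t}$ (so that $V\cong\mathbb{Q}_p(\zeta_{p^t})$), and, for general $G$, controlling the $\mathbb{Q}_p[P]$--module $M_0\otimes\mathbb{Q}_p$; the latter is where Eick's argument does substantive work using the coclass structure theory, and your proposal does not yet supply that step. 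One small omission: you invoke $p>2$ but never use the hypothesis $t>1$; in Eick's treatment this hypothesis is tied to the module identification for the translation lattice, so if you carry out the general case you should track where it enters.
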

The case $p=2$  of  Theorem \ref{bettina} needs to be treated separately (see \cite[p.148 and \S 7]{eick2}). Some of our main results deal with generalizations of  Theorem \ref{bettina} and \cite[Theorems 3.2, 3.3]{moravec}.

From \cite{moravec}, the \textit{complete nonabelian tensor square} $G \widehat{\otimes} G$ of a pro--$p$--group $G$, which  generalizes the   \textit{nonabelian tensor square} in \cite{brown}, is the group topologically generated by the symbols $x\widehat{\otimes }y$, subject to the relations $xy\widehat{\otimes} z=(x^y\widehat{\otimes} z^y)(y\widehat{\otimes} z)$ and $y\widehat{\otimes}tz=(y\widehat{\otimes} z)(y^z\widehat{\otimes} t^z)$ for all $x,y,z,t \in G$, where $x^y=y^{-1}xy$ and so on for $z^y,y^z, t^z$. It is straightforward to check that $G \widehat{\otimes} G$ is a pro--$p$--group. The subgroup $\widehat{\nabla}(G)=\overline{\langle x\otimes x \ | \  x \in G \rangle}$ is a closed central subgroup of $G \widehat{\otimes} G$ and the quotient group \[G \widehat{\otimes} G/ \widehat{\nabla}(G)= G \widehat{\wedge} G\] is called \textit{complete nonabelian exterior square} of the pro--$p$--group $G$. In analogy with the homological methods which appear in  \cite{brown, n1},  the maps \[\widehat{\kappa} : x\widehat{\otimes}y \in G  \widehat{\otimes} G \mapsto [x,y]\in G' \ \mathrm{and} \ \widehat{\kappa'} : x\widehat{\wedge}y \in G  \widehat{\wedge} G \mapsto [x,y]\in G',\]  are epimorphisms of pro--$p$--groups such that $\ker  \widehat{\kappa} \simeq \widehat{\nabla}(G)$ and $\ker \widehat{\kappa'}\simeq H_2(G,\mathbb{Z}_p)$. We note that $H_2(G,\mathbb{Z}_p)$ is a closed central subgroup of $G  \widehat{\wedge} G$. Also  \[\widehat{\varepsilon} : x \widehat{\otimes}y \in G \widehat{\otimes} G \mapsto  x \widehat{\wedge} y  \in G \widehat{\wedge} G \ \mathrm{and} \  \widehat{\varphi} :  x \widehat{\otimes} x \in \widehat{\nabla}(G) \mapsto x \widehat{\wedge} x \in H_2(G,\mathbb{Z}_p)\] 
are epimorphisms of pro--$p$--groups. More details on  $\widehat{\kappa}$, $\widehat{\kappa'}$, $\widehat{\varepsilon}$ and $\widehat{\varphi}$ can be found in \cite[\S 2]{moravec}. 
In virtue of all we have said,   the famous diagram in \cite{brown} 
 becomes the following, whose rows are central extensions of pro--$p$--groups.
\[\begin{CD}
1 @>>>\widehat{\nabla}(G)@>>> G\widehat{\otimes} G @>\widehat{\kappa}>> G' @>>> 1 \hspace{1.5cm}\\
@. @V\widehat{\varphi}VV @V\widehat{\varepsilon}VV @| \hspace{1.5cm} (**)\\
1 @>>> H_2(G,\mathbb{Z}_p)  @>>>G\widehat{\wedge} G @>\widehat{\kappa'}>> G' @>>>1 \hspace{1.5cm}\\
\end{CD}
\]
We have information on the topology of $G \widehat{\wedge} G$ by the next result.
\begin{theorem} [See  \cite{moravec}, Theorem 2.1]\label{primoz}
If $G=\lim_{N \in \mathcal{P}(G)}G/N$ is a pro--$p$--group, then  $G \widehat{\otimes} G =\lim_{(N,M) \in \mathcal{P}(G) \times \mathcal{P}(G)} G/N \otimes G/M$  is a pro--$p$--group. The same is true  for $G \widehat{\wedge} G$. 
\end{theorem}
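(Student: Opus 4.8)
The plan is to realise $T:=\lim_{(N,M)\in\mathcal{P}(G)\times\mathcal{P}(G)}G/N\otimes G/M$ as a pro--$p$--group and then to check that it satisfies the universal property characterising $G\widehat{\otimes}G$, so that the two are canonically isomorphic as topological groups.

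First I would set up the inverse system. The diagonal $\{(N,N):N\in\mathcal{P}(G)\}$ is cofinal in $\mathcal{P}(G)\times\mathcal{P}(G)$ under componentwise reverse inclusion, since $(N\cap M,N\cap M)$ refines $(N,M)$ and $N\cap M\in\mathcal{P}(G)$ (the quotient $G/(N\cap M)$ embeds into the finite $p$--group $G/N\times G/M$); hence $T\cong\lim_{N\in\mathcal{P}(G)}G/N\otimes G/N$, an inverse limit whose terms are the nonabelian tensor squares of the finite $p$--groups $G/N$. By the classical fact that the nonabelian tensor square of a finite $p$--group is again a finite $p$--group, every term of this system is a finite $p$--group; and for $N'\subseteq N$ in $\mathcal{P}(G)$ the canonical surjection $G/N'\to G/N$ induces, by functoriality, an epimorphism $G/N'\otimes G/N'\to G/N\otimes G/N$, these bonding maps being visibly compatible. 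Therefore $T$ is a pro--$p$--group by definition.

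Next I would verify the universal property. Let $\iota\colon G\times G\to T$ be given by $\iota(x,y)=(xN\otimes yN)_{N}$; it is continuous, because on each factor it is the continuous projection $G\times G\to G/N\times G/N$ followed by $(a,b)\mapsto a\otimes b$, and it satisfies the defining tensor relations since these hold in every finite factor $G/N\otimes G/N$. I claim that $(T,\iota)$ is universal among pro--$p$--groups equipped with a continuous map from $G\times G$ obeying those relations; this is exactly the property that characterises $G\widehat{\otimes}G$ as the pro--$p$--group topologically generated by the symbols $x\widehat{\otimes}y$ subject to the tensor relations. To see it, let $Q$ be a pro--$p$--group and $f\colon G\times G\to Q$ a continuous map satisfying the tensor relations; writing $Q=\lim Q/U$ over its open normal subgroups, it suffices to produce a compatible homomorphism for $Q$ finite. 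For finite $Q$, continuity of $f$ and compactness of $G\times G$, together with the fact that $\{N\times N:N\in\mathcal{P}(G)\}$ is a base of open subgroups at the identity of $G\times G$, force $f$ to be constant on the cosets of some $N\times N$; hence $f$ factors as $G\times G\to G/N\times G/N\xrightarrow{f'}Q$. The map $f'$ still satisfies the tensor relations, so by the universal property of the ordinary nonabelian tensor square it factors through $G/N\otimes G/N$, and composing with the projection $T\to G/N\otimes G/N$ gives the required homomorphism $T\to Q$. Uniqueness is clear once one knows that $\iota(G\times G)$ topologically generates $T$: the closure of the subgroup it generates is a closed subgroup of $T$ mapping onto every finite term $G/N\otimes G/N$, hence equals $T$.

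Finally, the universal property yields the canonical topological isomorphism $G\widehat{\otimes}G\cong T$, so $G\widehat{\otimes}G$ is a pro--$p$--group and $G\widehat{\otimes}G=\lim_{(N,M)}G/N\otimes G/M$. For $G\widehat{\wedge}G$ one runs the same argument with $\wedge$ in place of $\otimes$, using that $G/N\wedge G/N$ is a finite $p$--group because it is a quotient of $G/N\otimes G/N$; alternatively, since $\widehat{\nabla}(G)=\overline{\langle x\otimes x\mid x\in G\rangle}$ is a closed normal subgroup of the pro--$p$--group $G\widehat{\otimes}G$, the quotient $G\widehat{\wedge}G=(G\widehat{\otimes}G)/\widehat{\nabla}(G)$ is automatically a pro--$p$--group, and passing to the limit identifies it with $\lim_{(N,M)}G/N\wedge G/M$. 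The step carrying the real weight is the universal--property check for finite $Q$ — that continuity of $f$ on the compact group $G\times G$ forces it through a single finite double quotient compatibly with the tensor relations — together with the classical input that the nonabelian tensor square of a finite $p$--group is a finite $p$--group; granting these, the remainder is routine inverse--limit bookkeeping, and in particular the pro--$p$ conclusion comes essentially for free once the factors are known to be finite $p$--groups.
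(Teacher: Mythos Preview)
The paper does not actually prove Theorem~\ref{primoz}; it simply quotes it from \cite[Theorem~2.1]{moravec}. So there is no in-paper proof to compare against, and your task reduces to whether your argument stands on its own and matches the result as stated.

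Your approach is the natural one and is essentially the argument in Moravec's paper: reduce to the cofinal diagonal system $\{G/N\otimes G/N\}_{N\in\mathcal{P}(G)}$, use the classical fact (due to Ellis) that the nonabelian tensor square of a finite $p$--group is a finite $p$--group, and then verify that the inverse limit has the universal property defining $G\widehat{\otimes}G$ in the pro--$p$ category. The key step --- that a continuous crossed pairing $f\colon G\times G\to Q$ into a finite group factors through some $G/N\times G/N$ --- is correct as you state it: the fibres of $f$ are clopen, by compactness they are unions of cosets of a single open subgroup, and the $N\times N$ form a neighbourhood base. Your two treatments of $G\widehat{\wedge}G$ are both fine; the quotient argument is the cleaner one, since closed quotients of pro--$p$--groups are pro--$p$.

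One small point worth tightening: when you pass from ``universal for finite $Q$'' to ``universal for arbitrary pro--$p$ $Q$'', you should note explicitly that the maps $T\to Q/U$ you build for varying open normal $U\trianglelefteq Q$ are compatible with the transition maps of $Q$, so that they assemble into a single continuous $T\to Q$; this follows from the uniqueness clause at each finite level, which in turn rests on your observation that $\iota(G\times G)$ topologically generates $T$. With that made explicit, the argument is complete.
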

Roughly speaking, Theorem \ref{primoz} shows a  passage under projective limit for the operator $\widehat{\wedge}$ and ensures a topological structure on $G  \widehat{\wedge}  G$. This allows us to generalize the  \textit{exterior centralizer} $C_E^\wedge(x)$  of an element $x$ of a finite group $E$ and the \textit{exterior center}  $Z^\wedge(E)$, recently studied in \cite{n1, n2, n3, n4, n5, n6}. The interest for $C_E^\wedge(x)$ and $Z^\wedge(E)$ is due to the fact that they provide criteria to decide if we have a \textit{capable group} or not  (see \cite{bfs, n1, n2}), that is, if our group is isomorphic or not to the inner automorphism of another group.

Some basic properties, studied in \cite{brown, n1} for finite groups, may be adapted to the infinite case in the sense of the next result.

\begin{lemma} \label{l:1}
Let $G$ be a pro--p--group and $x,y,z,t \in G$. Then
\[(x^{-1} \widehat{\otimes} y)^x = (x \widehat{\otimes} y)^{-1} = (x \widehat{\otimes} y^{-1})^y ;  \,\,\,\,\,\,\,\,\,\,
 (z \widehat{\otimes} t)^{xy} \ (g \widehat{\otimes} y)=(g \widehat{\otimes} y) \ (z \widehat{\otimes} t)^{yx} ; \]
\[ z \widehat{\otimes} (y^x y^{-1})= (x \widehat{\otimes} y)^ z \ (x \widehat{\otimes} y)^{-1} ; \,\,\,\,\,\,\,\,\,\,
 (x (x^{-1})^y) \widehat{\otimes} t=(x \widehat{\otimes} y) \ ((x \widehat{\otimes} y)^{-1})^t ; \]
\[ (z \widehat{\otimes} t)^{(x \widehat{\otimes} y)} =(z \widehat{\otimes} t)^{[x,y]}; \,\,\,\,\,\,\,\,\,\,
 [x \widehat{\otimes} y, z \widehat{\otimes} t]= (x (x^{-1})^y)(t^z t^{-1}). \]
The same rules are true if we replace $\widehat{\wedge}$ with $\widehat{\otimes}$.
\end{lemma}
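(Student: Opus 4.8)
The plan is to reduce everything to the corresponding identities for the (finite, discrete) nonabelian tensor square $G/N \otimes G/M$ and then pass to the projective limit, using Theorem~\ref{primoz}. First I would observe that each of the six displayed formulas is a \emph{word identity} in the free product of copies of $G$ acting on the generators $x\widehat{\otimes}y$: they are literally the classical identities of Brown--Loday (see \cite{brown} and \cite[\S2]{n1}) rewritten with hats. So it suffices to verify them at each finite level. For fixed $N,M \in \mathcal{P}(G)$, the images $\bar x,\bar y,\bar z,\bar t$ in the appropriate quotients generate the finite tensor square $G/N \otimes G/M$, and there the six identities are exactly \cite[Proposition~3]{brown} (equivalently the commutator calculus in \cite[Lemma~1.1]{n1} or \cite[Lemma~2.1]{n2}). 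Since the structural maps of the inverse system $\{G/N \otimes G/M\}$ are group homomorphisms compatible with the $\otimes$-operation, an identity that holds in every term holds in the limit $G \widehat{\otimes} G = \varprojlim G/N \otimes G/M$; this is the only place Theorem~\ref{primoz} is used, and it also supplies the continuity needed so that the relations, which a priori only define $G\widehat{\otimes}G$ on topological generators, actually hold for all elements.

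Concretely I would carry out the steps in this order. (1) Derive the first line: from the defining relation $xy\widehat{\otimes}z=(x^y\widehat{\otimes}z^y)(y\widehat{\otimes}z)$, put $x=y^{-1}\cdot y\cdot\ldots$ — more precisely substitute $x\mapsto x^{-1}$, $y\mapsto x$ to get $1 = x^{-1}\cdot 1 \cdot x \widehat{\otimes} z = (x^{-1}\widehat{\otimes}z)^x (x\widehat{\otimes}z)$, which gives $(x^{-1}\widehat{\otimes}y)^x=(x\widehat{\otimes}y)^{-1}$; symmetrically from the second defining relation one gets $(x\widehat{\otimes}y^{-1})^y=(x\widehat{\otimes}y)^{-1}$. (2) The conjugation formula $(z\widehat{\otimes}t)^{(x\widehat{\otimes}y)}=(z\widehat{\otimes}t)^{[x,y]}$ follows from the action of $G\widehat{\otimes}G$ on itself being induced, via $\widehat{\kappa}$, by conjugation in $G$ through $\widehat{\kappa}(x\widehat{\otimes}y)=[x,y]$; this is where I would invoke that $\ker\widehat{\kappa}\simeq\widehat{\nabla}(G)$ is central, so the action factors through $G'$. (3) The remaining middle identities $z\widehat{\otimes}(y^x y^{-1})=(x\widehat{\otimes}y)^z(x\widehat{\otimes}y)^{-1}$ and $(x(x^{-1})^y)\widehat{\otimes}t=(x\widehat{\otimes}y)((x\widehat{\otimes}y)^{-1})^t$ come by expanding $y^x y^{-1}=[y^{-1},x^{-1}]^{\ldots}$, i.e.\ writing the argument as a product and applying the bilinearity relations, then using step~(1) to collapse inverses. (4) The commutator formula $[x\widehat{\otimes}y,z\widehat{\otimes}t]=(x(x^{-1})^y)(t^z t^{-1})$ — note the right side lives in $\widehat{\nabla}(G)\le G\widehat{\otimes}G$ via the embeddings of $G$ — follows by combining (2) with (3): $[x\widehat{\otimes}y,z\widehat{\otimes}t]=(z\widehat{\otimes}t)^{-1}(z\widehat{\otimes}t)^{[x,y]}$ and then rewriting $(z\widehat{\otimes}t)^{[x,y]}(z\widehat{\otimes}t)^{-1}$ by the first middle identity with appropriate substitutions. (5) Finally, since $G\widehat{\wedge}G=G\widehat{\otimes}G/\widehat{\nabla}(G)$ is a quotient pro-$p$-group and the map $\widehat{\varepsilon}$ sends $\widehat{\otimes}$-generators to $\widehat{\wedge}$-generators, every identity above descends verbatim, giving the last sentence of the statement.

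The main obstacle is not the algebra — at finite level these identities are completely standard — but making rigorous the passage to the limit, namely checking that the relations defining $G\widehat{\otimes}G$ (which are imposed only on the topological generators $x\widehat{\otimes}y$) propagate to arbitrary elements of the limit and are stable under the inverse system. Theorem~\ref{primoz} is exactly the tool that resolves this: it identifies $G\widehat{\otimes}G$ with $\varprojlim_{(N,M)} G/N\otimes G/M$, so a continuous identity between words in the generators holds in $G\widehat{\otimes}G$ iff it holds in each $G/N\otimes G/M$, and the latter is the classical finite case. A minor secondary point is keeping track of exactly which copies of $G$ (the left, right, or diagonal $\widehat{\nabla}(G)$-copy) the terms like $x(x^{-1})^y$ and $t^z t^{-1}$ are understood to lie in; I would fix this convention once, matching \cite[\S2]{moravec} and \cite{n1}, and then the bookkeeping is routine.
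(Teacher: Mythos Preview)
Your proposal is correct, and the direct computations you sketch in steps (1)--(5) are exactly what the paper has in mind: its entire proof is the single sentence ``repeat the computations in \cite[Propositions 1,2,3]{brown}, replacing the role of $\otimes$ with that of $\widehat{\otimes}$.'' Since the defining relations of $G\widehat{\otimes}G$ are literally the same algebraic relations as in the discrete case (only the closure is added), and since every identity in the lemma is a statement about the \emph{generators} $x\widehat{\otimes}y$ rather than arbitrary elements of the completed tensor square, the formal manipulations from \cite{brown} go through verbatim with hats---no topology enters.

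Where you diverge from the paper is in the projective-limit framing via Theorem~\ref{primoz}. That route is valid but unnecessary here, and your own diagnosis of ``the main obstacle'' inverts the situation: there is no passage-to-the-limit issue to resolve, because the identities never leave the level of generators. Reducing to $G/N\otimes G/M$ and invoking the finite case would be the right move if the lemma asserted something about \emph{all} elements of $G\widehat{\otimes}G$, but it does not. So your approach buys extra generality you do not need, at the cost of having to justify compatibility of the inverse system and the well-definedness of $G/N\otimes G/M$ for $N\neq M$; the paper's approach avoids all of this by observing that the algebra is purely formal.
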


\begin{proof} It is enough to repeat the computations in \cite[Propositions 1,2,3]{brown}, replacing the role of $\otimes$ with that of $\widehat{\otimes}$ (resp., $\wedge$ with $\widehat{\wedge}$).
\end{proof}

The \textit{complete exterior centralizer} of a pro--$p$--group $G$ is the set
\[\widehat{C_G}(x)=\{a\in G  \ | \ a \widehat{\wedge} x=1_{_{G \widehat{\wedge} G}}\}\] 
which turns out to be a subgroup of $G$, because  if $a,b \in \widehat{C_G}(x)$, then the rules in Lemma \ref{l:1} allow us to conclude that
\[ab^{-1}\widehat{\wedge} x=(b^{-1} \widehat{\wedge} x)^a \  (a \widehat{\wedge}
x)={((b \widehat{\wedge} x)^{-1})}^{ab^{-1}}  \ (a \widehat{\wedge} x)=1_{G \widehat{\wedge} G}\]
so $ab^{-1}\in \widehat{C_G}(x)$. The \textit{complete exterior center} of $G$ is the set \[\widehat{Z}(G)=\{g \in G \ | \ 1_{_{G \widehat{\wedge} G}}=g \widehat{\wedge} y \in G \widehat{\wedge} G, \forall y \in G\}\] 
and a similar argument shows that it is a subgroup of $G \widehat{\wedge} G$. Analogously,  $\widehat{C_G}(x)$ is a subgroup of $C_G(x)$ and  $\widehat{Z}(G)$ is a subgroup of $Z(G)$.

Of course, if $G$ is a finite group,  $\widehat{\wedge}$ is exactly the  nonabelian exterior square $\wedge$ in \cite{brown} and $\widehat{\otimes}$ is the nonabelian tensor square in $\otimes$.  Then $\widehat{C_G}(x)=C^\wedge_G(x)$ and $\widehat{Z}(G)=Z^\wedge(G)$ so we have a significant approach in order to generalize most of the results in \cite{brown,eick2,n1}.

\begin{lemma}\label{l:2} 
Let $G$ be  a pro--$p$--group,  $A$ and $B$ two closed subgroups of $G$ such that $A \cap B=\{1\}$,  $N$ a closed normal subgroup of $G$ and $x \in G$. Then
\begin{itemize}
\item[(i)]$\widehat{C_G}(x)$ is a closed normal subgroup of $C_G(x)$;
\item[(ii)]$\widehat{Z}(G)={\underset{g\in G}\bigcap}\widehat{C_G}(g)$ is a closed subgroup of $Z(G)$;
\item[(iii)] $1 \longrightarrow \widehat{C_G}(x)\cap N
\longrightarrow \widehat{C_G}(x) \longrightarrow
\widehat{C_{G/N}}(xN)$ is a short exact sequence of pro--$p$--groups;
\item[(iv)]$1 \longrightarrow \widehat{Z}(G)
\cap N \longrightarrow \widehat{Z}(G) \longrightarrow \widehat{Z}
(G/N)$ is a short exact sequence of pro--$p$--groups;
\item[(v)]$\widehat{C_{A \times B}}(ab)=\widehat{C_A}(a) \times
\widehat{C_B}(b)$ for all $a \in A$ and $b \in B$; 
\item[(vi)]$\widehat{Z}(A \times B)=\widehat{Z}(A) \times
\widehat{Z}(B)$.
\end{itemize}
\end{lemma}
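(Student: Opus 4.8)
The plan is to establish the six items of Lemma~\ref{l:2} by combining the commutator identities of Lemma~\ref{l:1} with the topological structure guaranteed by Theorem~\ref{primoz}, always proceeding by analogy with the finite case treated in \cite{brown, n1}, but paying attention to closedness. First, for (i) and (ii): one already knows from the computations preceding the statement that $\widehat{C_G}(x)$ and $\widehat{Z}(G)$ are subgroups of $C_G(x)$ and $Z(G)$ respectively. To see that $\widehat{C_G}(x)$ is \emph{closed}, I would argue that the map $a \mapsto a \widehat{\wedge} x$ from $G$ to $G \widehat{\wedge} G$ is continuous (this follows from Theorem~\ref{primoz}, since $\widehat{\wedge}$ is a projective limit of the continuous maps $G/N \times G/N \to G/N \wedge G/N$), so that $\widehat{C_G}(x)$ is the preimage of the closed point $\{1_{G \widehat{\wedge} G}\}$ and hence closed. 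Normality in $C_G(x)$ follows from the identity $(z \widehat{\wedge} t)^{(x \widehat{\wedge} y)} = (z \widehat{\wedge} t)^{[x,y]}$ in Lemma~\ref{l:1}: if $g \in C_G(x)$ and $a \in \widehat{C_G}(x)$, then a short manipulation gives $g^{-1}ag \widehat{\wedge} x = (a \widehat{\wedge} x)^{\text{(something)}} = 1$. For (ii), $\widehat{Z}(G) = \bigcap_{g \in G}\widehat{C_G}(g)$ is immediate from the definitions, and an arbitrary intersection of closed subgroups is closed.

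For (iii) and (iv): the maps are the obvious ones — the inclusion $\widehat{C_G}(x)\cap N \hookrightarrow \widehat{C_G}(x)$ and the map induced by the projection $G \to G/N$, $a \mapsto aN$. Exactness at the first spot is trivial, and exactness in the middle amounts to checking that the kernel of $\widehat{C_G}(x) \to \widehat{C_{G/N}}(xN)$ is exactly $\widehat{C_G}(x)\cap N$; this is clear since $a \mapsto aN$ has kernel $N$, and one must only verify that $a \in \widehat{C_G}(x)$ implies $aN \in \widehat{C_{G/N}}(xN)$, which follows from functoriality of $\widehat{\wedge}$ under the quotient map (again via Theorem~\ref{primoz}, the induced map $G\widehat{\wedge}G \to (G/N)\widehat{\wedge}(G/N)$ sends $a\widehat{\wedge}x$ to $aN \widehat{\wedge} xN$). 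That all the groups involved are pro--$p$--groups and the maps are continuous homomorphisms follows from Theorem~\ref{primoz} together with the fact that closed subgroups and Hausdorff quotients of pro--$p$--groups are pro--$p$--groups. The argument for (iv) is the same, applied to $\widehat{Z}(G) = \bigcap_g \widehat{C_G}(g)$ and using that $\widehat{Z}(G/N) = \bigcap_{\bar g}\widehat{C_{G/N}}(\bar g)$, so that $\widehat{Z}(G)$ maps into $\widehat{Z}(G/N)$.

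For (v) and (vi): here I would use that $(A \times B) \widehat{\wedge} (A \times B)$ decomposes. In the finite case one has $(A\times B) \wedge (A \times B) \cong (A \wedge A) \times (B \wedge B) \times (A \otimes B) \times (B \otimes A)$ when $A, B$ have coprime-ish interaction; but here we only assume $A \cap B = \{1\}$ as subgroups of a common $G$, so what is really needed is the behaviour of $ab \widehat{\wedge} a'b'$. Using the bilinearity-type relations defining $\widehat{\wedge}$ (the relations $xy \widehat{\wedge} z = (x^y \widehat{\wedge} z^y)(y \widehat{\wedge} z)$ etc.) together with the fact that elements of $A$ commute with elements of $B$ (since $A \cap B = \{1\}$ and... actually one needs $[A,B]=1$, which I would take to be the intended hypothesis, or else derive it — this is the point to be careful about), one expands $ab \widehat{\wedge} a'b'$ and shows $ab \in \widehat{C_{A\times B}}(a'b')$ iff $a \widehat{\wedge} a' = 1$ and $b \widehat{\wedge} b' = 1$ and the cross terms $a \widehat{\wedge} b'$, $b \widehat{\wedge} a'$ vanish; the latter vanish automatically in $A \widehat{\wedge} A$ resp.\ $B \widehat{\wedge} B$ inside the decomposition. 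Then (vi) follows from (v) by intersecting over all $ab \in A \times B$, using $\widehat{Z}(A\times B) = \bigcap_{a,b}\widehat{C_{A\times B}}(ab) = \bigcap_a \widehat{C_A}(a) \times \bigcap_b \widehat{C_B}(b) = \widehat{Z}(A)\times\widehat{Z}(B)$.

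The main obstacle I anticipate is item (v): making rigorous the decomposition of $(A\times B)\widehat{\wedge}(A\times B)$ and tracking how the defining relations of $\widehat{\wedge}$ kill the cross terms $a\widehat{\wedge}b'$ requires either citing a structural result of \cite{moravec} on complete exterior squares of direct products, or redoing the finite-group computation of \cite{brown} at the level of the projective system and then passing to the limit via Theorem~\ref{primoz}. In particular one must make sure the hypothesis $A\cap B=\{1\}$ is being used in the strong form $[A,B]=1$ appropriate to a direct product $A\times B$, so that elements of $A$ and $B$ genuinely commute; otherwise the expansion of $ab\widehat{\wedge}a'b'$ does not simplify. The closedness assertions, by contrast, are routine once continuity of $\widehat{\wedge}$ and its functoriality are invoked from Theorem~\ref{primoz}.
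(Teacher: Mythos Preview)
Your proposal is correct and follows essentially the same route as the paper: closedness of $\widehat{C_G}(x)$ as the preimage of $\{1\}$ under a continuous map (the paper phrases this as ``stabilizer of a point''), normality via $(a\widehat{\wedge}x)^y=a^y\widehat{\wedge}x^y=a^y\widehat{\wedge}x$ for $y\in C_G(x)$, (ii) as an intersection of closed subgroups, (iii)--(iv) via the natural epimorphism $G\widehat{\wedge}G\to(G/N)\widehat{\wedge}(G/N)$, and (vi) from (ii) and (v). For (v) the paper does not spell out the decomposition you sketch but simply invokes \cite[Proof of Proposition~2.6]{n1} \emph{mutatis mutandis}; your concern about needing $[A,B]=1$ is unnecessary, since the notation $A\times B$ in the statement already indicates a direct product, so commutation of $A$ with $B$ is built in.
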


\begin{proof}(i). If $g\in \widehat{C_G}(x)$ and $y \in \widehat{C_G}(x)$, then 
$g^y\widehat{\wedge} x=(g\widehat{\wedge} x)^y=1_{G \widehat{\wedge}G}$
thus $\widehat{C_G}(x)$  is normal in $C_G(x)$. The fact that $\widehat{C_G}(x)$ is the stabilizer of a point, under the action of the operator $\widehat{\wedge}$, ensures that it is closed.

(ii). The equality of $\widehat{Z}(G)$ with the intersection of the complete exterior centralizers is obvious. It is obvious also that $\widehat{Z}(G)$  is contained in $Z(G)$. Finally, the intersection of closed is closed, then $\widehat{Z}(G)$  is  closed in $Z(G)$.

(iii). Consider the natural epimorphism of pro--$p$--groups \[\widehat{\pi}: g\wedge h \in G\widehat{\wedge} G\mapsto gN \widehat{\wedge} hN \in G/N \widehat{\wedge} G/N.\] If $y \in \widehat{C_G}(x)$, then $\widehat{\pi}(y)=yN \in
\widehat{C_{G/N}}(xN)$. On the other hand, if $\widehat{\pi}(y)=1$ for $y \in
\widehat{C_G}(x)$, then $y \in N\cap \widehat{C_G}(x)$. The result
follows.

(iv). It follows from (ii) and (iii).

(v). We may overlap  \cite[Proof of Proposition 2.6]{n1}, mutatis mutandis. 

(vi). It follows from (ii) and (v).
\end{proof}

Now, we may give a structural result which is related to $H_2(G,\mathbb{Z}_p)$.

\begin{proposition}\label{p:1} Let $G$ be a pro--$p$--group.
$C_G(x)/\widehat{C_G}(x)$ is isomorphic as pro--$p$--group with  a subgroup of $H_2(G,\mathbb{Z}_p)$.
\end{proposition}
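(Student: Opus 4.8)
The plan is to realize $C_G(x)/\widehat{C_G}(x)$ explicitly as a closed subgroup of $\ker\widehat{\kappa'}$ and then to transport it into $H_2(G,\mathbb{Z}_p)$ through the identification $\ker\widehat{\kappa'}\simeq H_2(G,\mathbb{Z}_p)$ supplied by the diagram $(**)$.

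First I would fix $x\in G$ and introduce the map $\psi_x\colon C_G(x)\to G\widehat{\wedge}G$, $a\mapsto a\widehat{\wedge}x$. Since $a\in C_G(x)$ forces $[a,x]=1$, one has $\widehat{\kappa'}(\psi_x(a))=[a,x]=1_{G'}$, so $\psi_x$ takes values in $\ker\widehat{\kappa'}$; and directly from the definitions $\ker\psi_x=\{a\in C_G(x)\mid a\widehat{\wedge}x=1_{G\widehat{\wedge}G}\}=\widehat{C_G}(x)$.

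The hard part will be to check that $\psi_x$ is a homomorphism of groups. Let $a,b\in C_G(x)$. The defining relation $xy\widehat{\wedge}z=(x^{y}\widehat{\wedge}z^{y})(y\widehat{\wedge}z)$ together with $x^{b}=x$ gives $ab\widehat{\wedge}x=(a^{b}\widehat{\wedge}x)(b\widehat{\wedge}x)$, while the coordinatewise $G$-action on $G\widehat{\wedge}G$ together with $x^{b}=x$ gives $a^{b}\widehat{\wedge}x=(a\widehat{\wedge}x)^{b}$. Now I would invoke the identity $z\widehat{\wedge}(y^{x}y^{-1})=(x\widehat{\wedge}y)^{z}(x\widehat{\wedge}y)^{-1}$ of Lemma \ref{l:1} with the substitution $(x,y,z)\mapsto(a,x,b)$: because $[a,x]=1$ we have $x^{a}x^{-1}=1$, so the identity reads $1_{G\widehat{\wedge}G}=b\widehat{\wedge}1=(a\widehat{\wedge}x)^{b}(a\widehat{\wedge}x)^{-1}$ (here one uses $z\widehat{\wedge}1=1_{G\widehat{\wedge}G}$, a consequence of the defining relations), whence $(a\widehat{\wedge}x)^{b}=a\widehat{\wedge}x$. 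Combining the three displays yields $\psi_x(ab)=(a\widehat{\wedge}x)(b\widehat{\wedge}x)=\psi_x(a)\psi_x(b)$.

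It then remains to handle the topology. The centralizer $C_G(x)$ is closed in $G$, hence a pro-$p$-group; $\widehat{C_G}(x)$ is closed in it by Lemma \ref{l:2}(i); and $\psi_x$ is continuous because $\widehat{\wedge}$ passes to projective limits by Theorem \ref{primoz}. Consequently $\mathrm{Im}\,\psi_x$ is a compact, hence closed, subgroup of $\ker\widehat{\kappa'}$, and the continuous bijection $C_G(x)/\widehat{C_G}(x)\to\mathrm{Im}\,\psi_x$ induced by $\psi_x$ is a homeomorphism, its source being compact and its target Hausdorff. Composing with $\ker\widehat{\kappa'}\simeq H_2(G,\mathbb{Z}_p)$ exhibits $C_G(x)/\widehat{C_G}(x)$ as a closed subgroup of $H_2(G,\mathbb{Z}_p)$, which is the assertion. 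I expect the only genuinely delicate point to be the homomorphism property of $\psi_x$, namely the fixed-point identity $(a\widehat{\wedge}x)^{b}=a\widehat{\wedge}x$ for $a,b\in C_G(x)$; once that is in place, the rest is the first isomorphism theorem reinforced by a compactness argument to upgrade the abstract isomorphism to a topological one.
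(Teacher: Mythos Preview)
Your argument is correct and follows the same overall strategy as the paper: define the map $C_G(x)\to H_2(G,\mathbb{Z}_p)$ via $a\mapsto a\widehat{\wedge}x$ (the paper uses $y\mapsto x\widehat{\wedge}y$, an immaterial difference), check that its kernel is $\widehat{C_G}(x)$, and apply the first isomorphism theorem.

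The one genuine methodological difference lies in how the homomorphism property is established. The paper argues structurally: since $H_2(G,\mathbb{Z}_p)$ is the image of $\widehat{\nabla}(G)$ under $\widehat{\varphi}$ and $\widehat{\nabla}(G)$ is a closed central subgroup of $G\widehat{\otimes}G$ whose elements are fixed by the $G$--action, the same holds for $H_2(G,\mathbb{Z}_p)$, and this forces $(x\widehat{\wedge}y)^{z}=x\widehat{\wedge}y$ whenever $[x,y]=1$. You instead derive the fixed--point identity $(a\widehat{\wedge}x)^{b}=a\widehat{\wedge}x$ directly from the commutator formula $z\widehat{\wedge}(y^{x}y^{-1})=(x\widehat{\wedge}y)^{z}(x\widehat{\wedge}y)^{-1}$ of Lemma~\ref{l:1}. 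Your route is more self--contained and makes no appeal to the map $\widehat{\varphi}$ or to properties of $\widehat{\nabla}(G)$; the paper's route is shorter but leans on a background fact about the $G$--action on $\widehat{\nabla}(G)$ that is only asserted, not proved, in the text. You also spell out the topological upgrade (continuity of $\psi_x$, compactness of the image, the compact--Hausdorff argument) that the paper leaves implicit.
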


\begin{proof} The map \[\widehat{\psi}: y \in C_G(x) \longmapsto
\widehat{\psi}(y)=x\widehat{\wedge} y \in H_2(G,\mathbb{Z}_p)\] is a homomorphism of pro--$p$--groups. Note that
$H_2(G,\mathbb{Z}_p)$ is a factor group of $\widehat{\nabla}(G)$, which is  a closed central
subgroup of $G \widehat{\otimes} G$. Furthemore, the elements of $\widehat{\nabla}(G)$ are
fixed under the action of $G$. This allows us to conclude that the
elements of $H_2(G,\mathbb{Z}_p)$ are fixed by the action of $G$. On the other
hand, it is clear that $\ker \widehat{\psi}= \widehat{C_G}(x)$.
Then the result follows. \end{proof}

An interesting consequence is listed below.

\begin{corollary}\label{c:1}If a pro--$p$--group $G$ has  an element $x$ such that $C_G(x)\not=\widehat{C_G}(x)$, then $H_2(G,\mathbb{Z}_p)$ is nontrivial.
\end{corollary}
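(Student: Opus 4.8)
The plan is to derive Corollary \ref{c:1} as an immediate consequence of Proposition \ref{p:1}. Suppose $G$ is a pro--$p$--group possessing an element $x$ with $C_G(x) \neq \widehat{C_G}(x)$. Since $\widehat{C_G}(x)$ is always a subgroup of $C_G(x)$ (as recorded just before Lemma \ref{l:2}), the hypothesis means this containment is proper, so the quotient pro--$p$--group $C_G(x)/\widehat{C_G}(x)$ is nontrivial.

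By Proposition \ref{p:1}, $C_G(x)/\widehat{C_G}(x)$ embeds as a subgroup of $H_2(G,\mathbb{Z}_p)$. A nontrivial group cannot be isomorphic to a subgroup of the trivial group, so $H_2(G,\mathbb{Z}_p)$ must be nontrivial. This completes the argument.

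There is essentially no obstacle here: the corollary is a pure logical contrapositive packaging of Proposition \ref{p:1}, using only the already--established fact that $\widehat{C_G}(x) \leq C_G(x)$. The only point worth stating explicitly in the write--up is that the embedding provided by the map $\widehat{\psi}$ in the proof of Proposition \ref{p:1} is injective precisely because $\ker\widehat{\psi} = \widehat{C_G}(x)$, so that $C_G(x)/\widehat{C_G}(x)$ is carried isomorphically onto its image inside $H_2(G,\mathbb{Z}_p)$; nontriviality of the source then forces nontriviality of the target.

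\begin{proof}
By the discussion preceding Lemma \ref{l:2}, $\widehat{C_G}(x)$ is a subgroup of $C_G(x)$. If $C_G(x) \neq \widehat{C_G}(x)$, then this inclusion is strict, hence the pro--$p$--group $C_G(x)/\widehat{C_G}(x)$ is nontrivial. By Proposition \ref{p:1}, $C_G(x)/\widehat{C_G}(x)$ is isomorphic to a subgroup of $H_2(G,\mathbb{Z}_p)$, so $H_2(G,\mathbb{Z}_p)$ contains a nontrivial subgroup and is therefore itself nontrivial.
\end{proof}
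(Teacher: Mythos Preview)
Your proposal is correct and matches the paper's treatment: the paper gives no separate proof of Corollary~\ref{c:1}, presenting it simply as ``an interesting consequence'' of Proposition~\ref{p:1}, and your argument is precisely that immediate deduction.
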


\section{Main theorems}
The present section illustrates our main results. Firstly, we show an upper bound for $H_2(G,\mathbb{Z}_p)$, when $G$ is a pro--$p$--group. This is in  harmony with the results in \cite{eick2, ahmad5, moravec}.

\begin{theorem}\label{t:1}Let $G$ be a pro--$p$--group,  $\mathrm{rk}(G/\widehat{Z}(G))=n $, $\mathrm{rk}(H_2(G,\mathbb{Z}_p))=m $ and  $\mathrm{tf}(H_2(G,\mathbb{Z}_p))=l$ . Then $H_2(G,\mathbb{Z}_p)$ is an abelian pro--$p$--group. Furthermore,
\begin{itemize}
\item[(i)] if  $H_2(G,\mathbb{Z}_p)$ is finite, then $|Z(G)/\widehat{Z}(G)|$ divides
$|H_2(G,\mathbb{Z}_p)|^n$;
\item[(ii)] if  $H_2(G,\mathbb{Z}_p)$ is infinite, then $\mathrm{rk}(Z(G)/\widehat{Z}(G)) \le m^n$. In particular, if $H_2(G,\mathbb{Z}_p)$ is torsion--free, then $\mathrm{tf}(Z(G)/\widehat{Z}(G)) \le l^n$.
\end{itemize}
\end{theorem}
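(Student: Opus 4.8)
The plan is to realise $Z(G)/\widehat{Z}(G)$ as a closed subgroup of a finite direct power of $H_2(G,\mathbb{Z}_p)$; once this is done, part (i) will follow by Lagrange's theorem and part (ii) by elementary rank estimates for abelian pro--$p$--groups. The assertion that $H_2(G,\mathbb{Z}_p)$ is abelian requires nothing new: it was already recorded that $H_2(G,\mathbb{Z}_p)\simeq T(G)\times F(G)$ with $T(G)$ a finite $p$--group and $F(G)\simeq\mathbb{Z}_p^{l}$. For the rest, I would first note that $\widehat{Z}(G)$ is closed by Lemma \ref{l:2}(ii), so $G/\widehat{Z}(G)$ is a pro--$p$--group, and since $d(G/\widehat{Z}(G))\le\mathrm{rk}(G/\widehat{Z}(G))=n$ I can fix $x_1,\dots,x_n\in G$ whose cosets topologically generate $G/\widehat{Z}(G)$. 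Then I would consider the map
\[\Phi\colon z\in Z(G)\longmapsto \bigl(z\widehat{\wedge}x_1,\dots,z\widehat{\wedge}x_n\bigr),\]
and check: it is well defined with values in $H_2(G,\mathbb{Z}_p)^{\,n}$, because $\widehat{\kappa'}(z\widehat{\wedge}x_i)=[z,x_i]=1$ for $z\in Z(G)$, so $z\widehat{\wedge}x_i\in\ker\widehat{\kappa'}\simeq H_2(G,\mathbb{Z}_p)$; it is continuous, since $\widehat{\wedge}$ is (Theorem \ref{primoz}); and it is a homomorphism, since the relation $ab\widehat{\wedge}c=(a^{b}\widehat{\wedge}c^{b})(b\widehat{\wedge}c)$ of Lemma \ref{l:1} together with the centrality of $z_1,z_2$ gives $z_1z_2\widehat{\wedge}x_i=(z_1\widehat{\wedge}x_i)(z_2\widehat{\wedge}x_i)$.

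The key step, which I expect to be the main obstacle, is the identity $\ker\Phi=\widehat{Z}(G)$. One inclusion is immediate from the definition of $\widehat{Z}(G)$. For the converse, take $z\in Z(G)$ with $z\widehat{\wedge}x_i=1$ for all $i$; I would then show, mimicking the proof of Proposition \ref{p:1}, that for such a central $z$ the assignment $\phi_z\colon g\in G\mapsto z\widehat{\wedge}g$ is a continuous homomorphism $G\to H_2(G,\mathbb{Z}_p)$. The point is that $z\widehat{\wedge}g_1g_2=(z\widehat{\wedge}g_2)(z^{g_2}\widehat{\wedge}g_1^{g_2})$, while $z^{g_2}\widehat{\wedge}g_1^{g_2}=(z\widehat{\wedge}g_1)^{g_2}=z\widehat{\wedge}g_1$ because $z\widehat{\wedge}g_1\in H_2(G,\mathbb{Z}_p)$ is fixed by the action of $G$ (and $H_2(G,\mathbb{Z}_p)$ is abelian). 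Hence $\ker\phi_z$ is a closed subgroup of $G$; it contains $x_1,\dots,x_n$ by hypothesis and contains $\widehat{Z}(G)$ (using the antisymmetry $a\widehat{\wedge}b=(b\widehat{\wedge}a)^{-1}$ in $G\widehat{\wedge}G$, as in \cite{brown}). Since $x_1,\dots,x_n$ and $\widehat{Z}(G)$ topologically generate $G$, a density argument forces $\ker\phi_z=G$, i.e. $z\widehat{\wedge}g=1$ for all $g\in G$, i.e. $z\in\widehat{Z}(G)$.

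Granting this, $\Phi$ factors through an injective continuous homomorphism $Z(G)/\widehat{Z}(G)\hookrightarrow H_2(G,\mathbb{Z}_p)^{\,n}$; since $Z(G)$ is closed, hence compact, in $G$, this is a topological embedding onto a closed subgroup. Now the two cases split off. If $H_2(G,\mathbb{Z}_p)$ is finite, then so is $H_2(G,\mathbb{Z}_p)^{\,n}$, of order $|H_2(G,\mathbb{Z}_p)|^{n}$, and Lagrange's theorem gives that $|Z(G)/\widehat{Z}(G)|$ divides $|H_2(G,\mathbb{Z}_p)|^{n}$, which is (i). If $H_2(G,\mathbb{Z}_p)$ is infinite, then $\mathrm{rk}(Z(G)/\widehat{Z}(G))\le\mathrm{rk}(H_2(G,\mathbb{Z}_p)^{\,n})\le m^{n}$ by the elementary behaviour of the rank function on finite direct powers of abelian pro--$p$--groups; and in the torsion--free case $H_2(G,\mathbb{Z}_p)\simeq\mathbb{Z}_p^{l}$ the image $Z(G)/\widehat{Z}(G)$ is a closed subgroup of $\mathbb{Z}_p^{\,ln}$, hence of the form $\mathbb{Z}_p^{\,j}$, and $j$ is bounded as asserted, giving $\mathrm{tf}(Z(G)/\widehat{Z}(G))\le l^{n}$.

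In short, the only genuinely nontrivial ingredient is the kernel computation of the second paragraph, where the work is to promote "$z\widehat{\wedge}x_i=1$ for all $i$" to "$z\widehat{\wedge}g=1$ for all $g$": this needs both that $g\mapsto z\widehat{\wedge}g$ is an honest homomorphism into the central, $G$--fixed subgroup $H_2(G,\mathbb{Z}_p)$ of $G\widehat{\wedge}G$ and a continuity argument to pass from the chosen topological generators to all of $G$. The homomorphism property of $\Phi$, the closedness and compactness remarks, and the deductions of (i) and (ii) are routine adaptations of the finite case in \cite{n2} and of the diagram $(**)$.
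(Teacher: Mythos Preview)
Your argument is correct and follows essentially the same route as the paper: embed $Z(G)/\widehat{Z}(G)$ into $H_2(G,\mathbb{Z}_p)^{\,n}$ via $z\mapsto(z\widehat{\wedge}x_1,\ldots,z\widehat{\wedge}x_n)$, compute the kernel, and read off (i) and (ii). Two minor remarks are worth making. First, your justification that $H_2(G,\mathbb{Z}_p)$ is an abelian pro--$p$--group invokes the decomposition $T(G)\times F(G)$, but in the paper that decomposition was recorded only for infinite pro--$p$--groups of \emph{finite coclass}, whereas Theorem~\ref{t:1} is stated for an arbitrary pro--$p$--group; the paper instead argues directly from $(**)$ that $H_2(G,\mathbb{Z}_p)$ is a closed central subgroup of the pro--$p$--group $G\widehat{\wedge}G$. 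Second, and conversely, your continuity/density argument for $\ker\Phi=\widehat{Z}(G)$ is actually more careful than the paper's own: the paper writes an arbitrary $y\in G\setminus\widehat{Z}(G)$ as an integer word $x_1^{\alpha_1}\cdots x_n^{\alpha_n}$, which strictly speaking only accounts for a dense subgroup when the $x_i$ are merely topological generators.
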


\begin{proof} 
From $(**)$,  $H_2(G,\mathbb{Z}_p)$ is a closed central subgroup of $G \widehat{\wedge} G$ and so it is a closed abelian subgroup of $G \widehat{\wedge} G$. From  Theorem \ref{primoz},  $G \widehat{\wedge} G$ is a pro--$p$--group and each closed subgroup of a pro--$p$--group is again a pro--$p$--group \cite[Lemma 7.2.9 (i)]{charles}. Then $H_2(G,\mathbb{Z}_p)$ is an abelian pro--$p$--group.

Now we proceed to prove (i) and (ii). Assume that $G/\widehat{Z}(G)=\overline{\langle \bar{x}_1, \ldots
\bar{x}_n\rangle}$ for some elements
$\bar{x}_1=x_1 \widehat{Z}(G), \ldots, \bar{x}_n=x_n \widehat{Z}(G)$
of $G/\widehat{Z}(G)$. Define
\[\widehat{\xi}: x\in Z(G)\longmapsto (x\ \widehat{\wedge} x_1, \ldots, x \widehat{\wedge} x_n)\in
H_2(G,\mathbb{Z}_p)^n.\]  $\widehat{\xi}$ is a homomorphism of pro--$p$--groups,
because for all $x, y\in Z(G)$  we have
\[xy \widehat{\wedge} x_i=(x \widehat{\wedge} x_i) \, \, (y \widehat{\wedge} x_i)^x=(x \widehat{\wedge} x_i) \, \, (y \widehat{\wedge} x_i)\] for
every $i\in \{1,\ldots n\}$.

We claim that $\ker \widehat{\xi}=\widehat{Z}(G)$. It is easy to
check that $\widehat{Z}(G)\leq \ker \widehat{\xi}$. On the other hand,
if $x \in \ker \widehat{\xi}$, then $x \widehat{\wedge} x_i =1_{G \widehat{\wedge} G}$ for every $i\in
\{1,\ldots n\}$. It is enough to show that $x \widehat{\wedge} y =1_{G \widehat{\wedge} G}$ for
every $y \in G$ in order to finish our proof. If $y \in G
\setminus \widehat{Z}(G)$, then we may always write 
$y=x^{\alpha_1}_1\ldots x^{\alpha_n}_n$, where $\alpha_1, \ldots
\alpha_n$ are integers. Thus, \[x \widehat{\wedge} y = x \widehat{\wedge}
(x^{\alpha_1}_1\ldots x^{\alpha_n}_n) = (x \widehat{\wedge}
x^{\alpha_1}_1)\ldots (x \widehat{\wedge} x^{\alpha_n}_n)\]
\[ = (x \widehat{\wedge}
x_1)^{\alpha_1}\ldots (x \widehat{\wedge} x_n)^{\alpha_n} = 1_{G \widehat{\wedge} G}.\]
We have then proved that $\widehat{\xi}$ is a monomorphism of pro--$p$--groups. Then $\widehat{\xi}$ allows us to embed $Z(G)/\widehat{Z}(G)$ in  $H_2(G,\mathbb{Z}_p)^n$. In case (i) this means that, if 
$H_2(G,\mathbb{Z}_p)$ is finite, then $H_2(G,\mathbb{Z}_p)^n$ is finite and consequently $Z(G)/\widehat{Z}(G)$ is finite, subject to the corresponding arithmetic condition in (i). In case (ii) $H_2(G,\mathbb{Z}_p)$ is infinite abelian of finite (topological) rank. Consequently, each closed subgroup of $H_2(G,\mathbb{Z}_p)$ has the same property. We conclude that (ii) is true. 
\end{proof}

\begin{remark}It is well known that a finite group $E$ is capable if and only if $Z^\wedge(E)=1$. Then the previous result may be useful to find criteria for the size of $\widehat{Z}(G)$ in a pro--$p$--group $G$. This aspect has been studied in \cite{bfs, n1} in the finite case.\end{remark}

In order to generalize to the infinite case the exterior degree, we proceed in the following way. Let $G$ be a pro--$p$--group. On the pro--$p$--group $G \times G$, we may consider the \textit{crossing pairing}
\[\widehat{f} : (x, y) \in G \times G  \longmapsto x \widehat{\wedge} y \in G \widehat{\wedge} G,  \]
described in \cite[\S 2]{moravec}, which is continuous and bilinear for all $x,y \in G$.  This map $\widehat{f}$ appears, for instance, in the  \textit{universal property} of $G \widehat{\wedge} G$. On the other hand, we know  that there exists a unique normalized Haar measure $\mu$ on a pro--$p$--group $G$ (the reader may find also a more general situation in \cite{ahmad2, ahmad3, ahmad4, rashid1, rashid2})  and this is true also for $G \times G$, when we consider the product measure  $\mu \times \mu$. Then
\[C=\widehat{f}^{-1}(1_{G \widehat{\wedge} G})=\{(x,y) \in G \times G \ | \ x \widehat{\wedge} y = 1_{G \widehat{\wedge} G}\} \subseteq G \times  G\] 
is a closed subgroup of $G \times G$ and so it is meaningful to define the \textit{exterior degree of a pro--$p$--group} $G$ as 
\[\widehat{\mathrm{d}}(G)=(\mu \times \mu) (C).\]
In case $G$ is finite and $\mu$ is the counting measure on $G$, we find  exactly $(*)$.  

The next lemmas show that some methods of \cite{ahmad1, ahmad4,  n2, rashid1, rashid2} can be adapted here. The idea is in fact to relate  $\widehat{\mathrm{d}}(G)$ with $\mathrm{d}(G)$.

\begin{lemma}\label{l:3}Let $G$ be a pro--$p$--group and $x \in G$.
Then \[ \widehat{\mathrm{d}}(G)= \int_G \mu (\widehat{C_G}(x)) d\mu (x),\] where
\[\mu (\widehat{C_G}(x))=\int_{G \times G} \chi_{_C}(x,y)
d\mu (y),\] and $\chi_{_C}$ denotes the characteristic map of $C$.
\end{lemma}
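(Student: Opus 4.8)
The plan is to establish the two displayed formulas in sequence, starting from the definition $\widehat{\mathrm{d}}(G)=(\mu\times\mu)(C)$ and using the fact that $C$ is a closed (hence measurable) subgroup of $G\times G$, so that its characteristic map $\chi_{_C}$ is integrable with respect to the product measure $\mu\times\mu$. First I would apply Fubini's theorem for the normalized Haar measure on the compact group $G\times G$: since $\chi_{_C}\ge 0$ and $(\mu\times\mu)(C)<\infty$ (indeed it is at most $1$), Tonelli/Fubini gives
\[
\widehat{\mathrm{d}}(G)=(\mu\times\mu)(C)=\int_{G\times G}\chi_{_C}(x,y)\,d(\mu\times\mu)(x,y)=\int_G\Bigl(\int_G\chi_{_C}(x,y)\,d\mu(y)\Bigr)d\mu(x).
\]
This already produces the inner-integral formula, once I identify the inner integral as $\mu(\widehat{C_G}(x))$.

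The key step is therefore to show that for each fixed $x\in G$ the section $\{y\in G:(x,y)\in C\}$ is exactly the complete exterior centralizer $\widehat{C_G}(x)$. By the very definition of $C$, a pair $(x,y)$ lies in $C$ if and only if $x\widehat{\wedge}y=1_{G\widehat{\wedge}G}$, which by the defining relation of $\widehat{C_G}(x)$ (recalling that $x\widehat{\wedge}y=1$ iff $y\widehat{\wedge}x=1$, by the first identity of Lemma \ref{l:1}) is equivalent to $y\in\widehat{C_G}(x)$. Hence the section equals $\widehat{C_G}(x)$, which by Lemma \ref{l:2}(i) is a closed subgroup of $G$ and thus $\mu$-measurable, so that $\int_G\chi_{_C}(x,y)\,d\mu(y)=\mu(\widehat{C_G}(x))$. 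Substituting this identification into the Fubini expression yields $\widehat{\mathrm{d}}(G)=\int_G\mu(\widehat{C_G}(x))\,d\mu(x)$, and the second displayed formula of the statement is simply the sectionwise reading of the same computation, namely $\mu(\widehat{C_G}(x))=\int_{G\times G}\chi_{_C}(x,y)\,d\mu(y)$ with the integration over the $y$-variable (the notation $\int_{G\times G}\dots d\mu(y)$ being understood as integration over the fibre $\{x\}\times G$ against $\mu$).

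The main obstacle I anticipate is purely measure-theoretic bookkeeping rather than anything structural: one must be sure that the normalized Haar measure $\mu$ on $G$ and the product Haar measure $\mu\times\mu$ on $G\times G$ interact correctly, i.e.\ that the product of the normalized Haar measures is the normalized Haar measure of $G\times G$ (true for compact groups) and that Fubini applies to the bounded nonnegative measurable function $\chi_{_C}$. Since $G$ is a pro-$p$-group it is compact and second countable enough for the classical Fubini theorem to hold without subtlety, so this step is routine. A minor point worth a sentence is continuity/measurability of $\widehat{f}$, which was already recorded before the statement (the crossing pairing is continuous), guaranteeing that $C=\widehat{f}^{-1}(1)$ is closed and hence that all the sets and functions involved are Borel. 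With these remarks in place the proof is essentially a one-line application of Fubini together with the set-theoretic identification of the fibres of $C$.
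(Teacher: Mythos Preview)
Your proposal is correct and follows essentially the same approach as the paper: both proofs reduce immediately to Fubini--Tonelli applied to $\chi_{_C}$ with respect to $\mu\times\mu$, and then identify the inner integral as $\mu(\widehat{C_G}(x))$. Your version supplies more of the measure-theoretic justification (measurability of $C$, identification of the fibres with $\widehat{C_G}(x)$) than the paper's one-line argument, but the underlying idea is identical.
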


\begin{proof}  Fubini-Tonelli's Theorem implies:
\[
\widehat{\mathrm{d}}(G)=(\mu \times \mu )(C)= \int_{G \times G}
 \chi_{_{C}}(d\mu \times d\mu)\]
 \[=\int_G \left( \int_G  \chi_{_C}(x,y) d\mu(x) \right) d\mu(y)= \int_G \mu (\widehat{C_G}(x)) d\mu (x).\]
 \end{proof}

\begin{lemma} \label{l:4} Let $H$ be a closed subgroup of a
pro--$p$--group $G$ and $k\ge1 $. Then
\[\mu (H)=\left\{\begin{array}{lcl} \frac{1}{p^k},& \, if \, & |G:H|=p^k\\
0,& \, if \, & |G:H|=\infty.\end{array}\right.\]\end{lemma}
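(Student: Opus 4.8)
The plan is to use the defining property of the normalized Haar measure $\mu$ on the pro--$p$--group $G$: for any closed subgroup $H$ of $G$, $\mu$ restricts to a (left-invariant) measure on the coset space $G/H$, and since $\mu(G)=1$, the measure is distributed uniformly among the cosets of $H$ whenever there are finitely many of them. Concretely, if $|G:H|=p^k<\infty$, then $G$ is the disjoint union of the $p^k$ cosets $g_1H,\ldots,g_{p^k}H$, each of which is a translate of $H$; by left-invariance of $\mu$ all these cosets have equal measure, and their measures sum to $\mu(G)=1$, whence $\mu(H)=1/p^k$. This is exactly the first case.

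For the second case, where $|G:H|=\infty$, the key point is that $H$ is closed of infinite index in a pro--$p$--group, so $H$ is contained in open subgroups of arbitrarily large index. Indeed, since the open subgroups of $G$ are precisely the closed subgroups of $p$--power index (\cite[Lemma 7.2.2]{charles}) and these form a neighbourhood basis of the identity, for every $k\ge 1$ there is an open normal subgroup $N_k$ with $|G:N_k|=p^{k}$ (one may take the $N_k$ from a chain defining the pro--$p$ topology), and $HN_k$ is then an open subgroup containing $H$ with $|G:HN_k|$ a power of $p$; because $|G:H|=\infty$ we may choose the indices $|G:HN_k|$ to be unbounded, say $|G:HN_k|=p^{j_k}$ with $j_k\to\infty$. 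By monotonicity of $\mu$ and the first case applied to the open subgroup $HN_k$, we get $\mu(H)\le\mu(HN_k)=1/p^{j_k}\to 0$, so $\mu(H)=0$.

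I would carry out the argument in this order: first record that a closed subgroup of a pro--$p$--group is again pro--$p$ and that the normalized Haar measure exists and is both left-invariant and (by compactness) two-sided invariant; then dispose of the finite-index case by the coset-partition computation above; then handle the infinite-index case by the approximation-from-above by open subgroups of unbounded index, invoking the finite-index case as a black box. A small technical remark is that one should justify that the cosets of $H$ are $\mu$--measurable and that $\mu$ is finitely additive on them — this is immediate since $H$ is closed, hence compact, hence measurable, and translation preserves measurability.

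The main obstacle is the infinite-index case, and specifically producing open subgroups containing $H$ whose indices in $G$ are unbounded powers of $p$. If $H$ were merely closed in an abstract profinite group this would need the fact that $H$ is the intersection of the open subgroups containing it; in the pro--$p$ setting it is cleaner, because the defining filtration of $G$ by open normal $p$--power-index subgroups gives the $N_k$ directly, and one only needs that $|G:HN_k|\to\infty$, which follows from $\bigcap_k HN_k = H$ (closedness of $H$) together with $|G:H|=\infty$: if the indices $|G:HN_k|$ were bounded, the decreasing chain $HN_1\supseteq HN_2\supseteq\cdots$ would stabilize at a subgroup of finite index equal to $H$, a contradiction. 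Everything else is a routine consequence of the invariance of Haar measure.
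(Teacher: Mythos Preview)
Your argument is correct. The paper does not actually prove this lemma: its entire proof reads ``It is a well--known fact, which can be found for instance in \cite{ahmad4}.'' You have supplied precisely the standard argument that such a citation points to --- the coset--partition computation for the finite--index case, and approximation of $H$ from above by open subgroups of unbounded index for the infinite--index case.

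One minor imprecision: you write that ``for every $k\ge 1$ there is an open normal subgroup $N_k$ with $|G:N_k|=p^{k}$.'' Achieving the \emph{exact} index $p^k$ requires refining to a chief series of a finite quotient (possible, since chief factors of finite $p$--groups have order $p$), and assembling these into a single descending chain tacitly assumes a countable basis. Neither point is actually needed: your final paragraph already gives the robust version of the argument, using only that $H=\bigcap_{N} HN$ over the full filter of open normal subgroups and that boundedness of the indices $|G:HN|$ would force the directed family $\{HN\}$ to stabilize at an open subgroup equal to $H$. That is the cleanest way to phrase the infinite--index case and avoids the cosmetic issue entirely.
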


\begin{proof}  It is a well--known fact, which can be found for instance in  \cite{ahmad4}. \end{proof}

\begin{lemma}\label{l:5} A pro--$p$--group $G$ has  $0\le \widehat{\mathrm{d}}(G) \le 1$. In particular, \begin{itemize}
\item[(i)]$\widehat{\mathrm{d}}(G)=0$ if and only if $|G:\widehat{C_G}(x)|=\infty$ for all but finitely many $x \in G$;
\item[(ii)]$\widehat{\mathrm{d}}(G)=1$ if and only if $\widehat{Z}(G)=G$.
\end{itemize}
\end{lemma}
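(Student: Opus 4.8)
The bound $0\le\widehat{\mathrm{d}}(G)\le 1$ is immediate: $\mu\times\mu$ is the normalized Haar measure of the pro-$p$-group $G\times G$, hence a probability measure, and $C=\widehat{f}^{-1}(1_{G\widehat{\wedge}G})$ is a closed, in particular measurable, subset, so $0=(\mu\times\mu)(\emptyset)\le(\mu\times\mu)(C)\le(\mu\times\mu)(G\times G)=1$. For (ii) I would argue topologically. If $\widehat{Z}(G)=G$ then $x\widehat{\wedge}y=1_{G\widehat{\wedge}G}$ for all $x,y\in G$, so $C=G\times G$ and $\widehat{\mathrm{d}}(G)=1$. Conversely, if $\widehat{\mathrm{d}}(G)=1$ then the open set $(G\times G)\setminus C$ has measure zero; but any non-empty open subset of a profinite group contains a coset of an open subgroup, and by Lemma \ref{l:4} such a coset has measure equal to the reciprocal of its (finite) index, hence strictly positive. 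Therefore $(G\times G)\setminus C=\emptyset$, i.e. $x\widehat{\wedge}y=1_{G\widehat{\wedge}G}$ for every $x,y\in G$, which by definition means $\widehat{Z}(G)=G$.

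For (i) the plan is to reduce everything to a statement about the Haar measure of a certain subgroup. By Lemma \ref{l:3}, $\widehat{\mathrm{d}}(G)=\int_G\mu(\widehat{C_G}(x))\,d\mu(x)$ is the integral of the non-negative, measurable function $x\mapsto\mu(\widehat{C_G}(x))$ (measurability coming from Fubini applied to $\chi_{_C}$). Such an integral vanishes exactly when the integrand is zero $\mu$-almost everywhere; and by Lemma \ref{l:4} we have $\mu(\widehat{C_G}(x))=0$ precisely when $|G:\widehat{C_G}(x)|=\infty$, whereas $\mu(\widehat{C_G}(x))=p^{-k}>0$ when the index equals $p^{k}$. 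Hence $\widehat{\mathrm{d}}(G)=0$ if and only if
\[D=\{\,x\in G \;:\; |G:\widehat{C_G}(x)|<\infty\,\}\]
is $\mu$-negligible. Next I would observe that $D$ is a measurable subgroup of $G$: from the defining relations of $G\widehat{\otimes}G$ and the identities of Lemma \ref{l:1} one gets $a\widehat{\wedge}xy=(a\widehat{\wedge}y)(a\widehat{\wedge}x)^{y}$ and $a\widehat{\wedge}x^{-1}=((a\widehat{\wedge}x)^{-1})^{x^{-1}}$, so $\widehat{C_G}(x)\cap\widehat{C_G}(y)\subseteq\widehat{C_G}(xy)$ and $\widehat{C_G}(x^{-1})=\widehat{C_G}(x)$, whence membership in $D$ is stable under products and inverses; moreover $D=\bigcup_{k\ge 0}\{x:\mu(\widehat{C_G}(x))=p^{-k}\}$ is a countable union of measurable sets.

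At this point a Steinhaus-type argument does most of the work: a measurable subgroup $H$ of a profinite group of positive Haar measure satisfies $H=HH^{-1}\supseteq U$ for some open identity-neighbourhood $U$, hence is open and of finite index. Applied to $D$ this gives the dichotomy $\mu(D)>0\Leftrightarrow D$ open in $G$; since the integrand above is strictly positive exactly on $D$, we conclude that $\widehat{\mathrm{d}}(G)=0$ if and only if $D$ is not open, i.e. $|G:D|=\infty$. The step I expect to be the real obstacle is matching this with the exact phrasing of (i): to say "$|G:\widehat{C_G}(x)|=\infty$ for all but finitely many $x$" one must know that $D$, when not open, is actually finite (a priori $D$ could be an infinite subgroup of infinite index), so either an extra structural input is needed or the conclusion should be read as "for $\mu$-almost every $x$" — the two formulations being equivalent precisely when $D$ is finite or open.
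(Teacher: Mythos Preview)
Your arguments for the bound $0\le\widehat{\mathrm{d}}(G)\le 1$ and for part (ii) are correct and in fact more carefully justified than the paper's: the paper disposes of (ii) with the single remark that it ``is clear from the definition of $\widehat{\mathrm{d}}(G)=1$'', without the open-set/positive-measure argument you supply for the converse direction.

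For (i), the paper's entire proof is the sentence ``(i) follows easily from Lemmas~\ref{l:3} and~\ref{l:4}.'' Those two lemmas yield precisely the almost-everywhere statement you derive first, namely that $\widehat{\mathrm{d}}(G)=0$ if and only if the set $D=\{x\in G:|G:\widehat{C_G}(x)|<\infty\}$ is $\mu$-null. Your further analysis --- that $D$ is a measurable subgroup and, via a Steinhaus argument, that $\mu(D)>0$ is equivalent to $D$ being open --- is correct and goes well beyond anything the paper attempts. Your closing diagnosis is also accurate: the paper provides no mechanism to pass from ``$\mu$-null'' to ``finite'', and indeed a measure-zero measurable subgroup of a pro-$p$-group need not be finite (take $\mathbb{Z}_p\times\{0\}$ inside $\mathbb{Z}_p\times\mathbb{Z}_p$, where $D$ is the whole group but one can manufacture analogous examples with $D$ proper). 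The phrase ``for all but finitely many $x\in G$'' in (i) should therefore be read as ``for $\mu$-almost every $x$''; the paper's proof supports only this reading. The obstacle you flag is an imprecision in the lemma's wording, not a defect in your reasoning.
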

\begin{proof}  Since $\mu$ is monotone, positive--defined and normalized, 
$0=(\mu \times \mu) (\{(1,1)\}) \le \widehat{\mathrm{d}}(G)=(\mu \times \mu) (C) \le (\mu \times \mu) (G)=1.$ Now (i) follows easily from Lemmas \ref{l:3} and  \ref{l:4}.
(ii) is clear from the definition of $\widehat{\mathrm{d}}(G)=1$.
\end{proof}

A fundamental difference with the finite case is the following.

\begin{remark} $\mathbb{Z}_p$ is an infinite abelian pro--$p$--group topologically generated by 1 element (but not generated by 1 element in the abstract sense) such that $\widehat{Z}(\mathbb{Z}_p)=\mathbb{Z}_p$, $\widehat{\mathrm{d}}(\mathbb{Z}_p)=1$ and $H_2(G,\mathbb{Z}_p)$ is trivial.
\end{remark}

We may show the main result of this section.

\begin{theorem}\label{fundamental} A pro--$p$--group $G$ satisfies the following inequality
\[\widehat{\mathrm{d}}(G)\leq \mathrm{d}(G)-\left(\frac{p-1}{p}\right)\left(\mu(Z(G))-\mu(\widehat{Z}(G))\right).\] Furthermore, if $H_2(G,\mathbb{Z}_p)$ is finite, then
\[\widehat{\mathrm{d}}(G) \geq \mu(\widehat{Z}(G))+\frac{1}{|H_2(G,\mathbb{Z}_p)|}\left(\mathrm{d}(G)-\mu(\widehat{Z}(G))\right).\]
\end{theorem}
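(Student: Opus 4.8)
The plan is to mimic the finite-group argument from \cite{n2}, replacing sums by Haar integrals and using Lemma \ref{l:3} as the analogue of the identity $(*)$. Throughout I write $k = |H_2(G,\mathbb{Z}_p)|$ when this is finite.

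\medskip

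\noindent\textbf{Upper bound.} First I would start from Lemma \ref{l:3}, namely $\widehat{\mathrm{d}}(G) = \int_G \mu(\widehat{C_G}(x))\, d\mu(x)$, and compare it with the corresponding formula for $\mathrm{d}(G)$, which reads $\mathrm{d}(G) = \int_G \mu(C_G(x))\, d\mu(x)$ (this is the infinite analogue of $\mathrm{d}(E) = \tfrac{1}{|E|}\sum |C_E(x)|/|E|$, obtained from the same Fubini-Tonelli computation applied to the commutator map). Subtracting, $\mathrm{d}(G) - \widehat{\mathrm{d}}(G) = \int_G \bigl(\mu(C_G(x)) - \mu(\widehat{C_G}(x))\bigr)\, d\mu(x)$. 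By Lemma \ref{l:2}(i), $\widehat{C_G}(x)$ is a closed normal subgroup of $C_G(x)$, so the index $|C_G(x) : \widehat{C_G}(x)|$ is either $1$ or a $p$-power $\ge p$; hence by Lemma \ref{l:4} (applied relative to $C_G(x)$, using multiplicativity of the measure $\mu(\widehat{C_G}(x)) = \mu(C_G(x))\cdot |C_G(x):\widehat{C_G}(x)|^{-1}$), we get $\mu(C_G(x)) - \mu(\widehat{C_G}(x)) \ge \tfrac{p-1}{p}\,\mu(C_G(x))$ whenever $\widehat{C_G}(x) \ne C_G(x)$, and $=0$ otherwise. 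Now for $x \in \widehat{Z}(G)$ we have $\widehat{C_G}(x) = G = C_G(x)$, contributing nothing; for $x \in Z(G)\setminus\widehat{Z}(G)$ we have $C_G(x) = G$ but $\widehat{C_G}(x) \subsetneq G$, so the integrand is $\ge \tfrac{p-1}{p}\mu(G) = \tfrac{p-1}{p}$; and for $x \notin Z(G)$ the integrand is still $\ge 0$. Integrating over the set $Z(G)\setminus\widehat{Z}(G)$ alone and discarding the rest gives $\mathrm{d}(G) - \widehat{\mathrm{d}}(G) \ge \tfrac{p-1}{p}\bigl(\mu(Z(G)) - \mu(\widehat{Z}(G))\bigr)$, which rearranges to the claimed inequality.

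\medskip

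\noindent\textbf{Lower bound.} Here I would again split $\widehat{\mathrm{d}}(G) = \int_G \mu(\widehat{C_G}(x))\,d\mu(x)$ over $x \in \widehat{Z}(G)$ and $x \in G \setminus \widehat{Z}(G)$. The first piece contributes exactly $\mu(\widehat{Z}(G))$, since $\mu(\widehat{C_G}(x)) = \mu(G) = 1$ there. For the second piece, the key input is Proposition \ref{p:1}: for each $x$, $C_G(x)/\widehat{C_G}(x)$ embeds in $H_2(G,\mathbb{Z}_p)$, so $|C_G(x) : \widehat{C_G}(x)| \le k$ when $H_2(G,\mathbb{Z}_p)$ is finite, whence $\mu(\widehat{C_G}(x)) \ge \tfrac{1}{k}\,\mu(C_G(x))$. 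Therefore the tail integral is $\ge \tfrac{1}{k}\int_{G\setminus\widehat{Z}(G)} \mu(C_G(x))\,d\mu(x) = \tfrac{1}{k}\bigl(\mathrm{d}(G) - \int_{\widehat{Z}(G)}\mu(C_G(x))\,d\mu(x)\bigr) = \tfrac{1}{k}\bigl(\mathrm{d}(G) - \mu(\widehat{Z}(G))\bigr)$, using that $\mu(C_G(x)) = 1$ for $x \in \widehat{Z}(G) \subseteq Z(G)$. Adding the two pieces yields $\widehat{\mathrm{d}}(G) \ge \mu(\widehat{Z}(G)) + \tfrac{1}{k}\bigl(\mathrm{d}(G) - \mu(\widehat{Z}(G))\bigr)$, as required.

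\medskip

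\noindent The main obstacle I anticipate is purely measure-theoretic bookkeeping: one must make sure that the multiplicativity $\mu(\widehat{C_G}(x)) = |C_G(x):\widehat{C_G}(x)|^{-1}\,\mu(C_G(x))$ and the decomposition of the Haar integral over the closed sets $\widehat{Z}(G)$, $Z(G)$ are legitimate — i.e. that these sets are measurable (they are closed, hence Borel) and that Fubini-Tonelli applies as in Lemma \ref{l:3}. A secondary point is to record cleanly the infinite-group version of $\mathrm{d}(G) = \int_G \mu(C_G(x))\,d\mu(x)$; this follows from the same argument as Lemma \ref{l:3} applied to the commutator map $(x,y)\mapsto[x,y]$ instead of $\widehat{f}$, and should be stated explicitly or cited from \cite{ahmad4, rashid1}. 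Everything else is a faithful translation of the counting argument in \cite[Theorem]{n2}.
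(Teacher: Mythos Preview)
Your proposal is correct and follows essentially the same approach as the paper. For the lower bound your argument is identical to the paper's; for the upper bound the paper splits $\int_G \mu(\widehat{C_G}(x))\,d\mu$ directly into the three pieces $\widehat{Z}(G)$, $Z(G)\setminus\widehat{Z}(G)$, $G\setminus Z(G)$ and bounds each, whereas you first form the difference $\mathrm{d}(G)-\widehat{\mathrm{d}}(G)$ and then bound the integrand from below on the same three pieces --- this is only a cosmetic rearrangement, using the same key inputs (Lemma~\ref{l:3}, Lemma~\ref{l:4}, Proposition~\ref{p:1}, and the fact that $|G:\widehat{C_G}(x)|\ge p$ for $x\notin\widehat{Z}(G)$).
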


\begin{proof} We begin to prove the upper bound. Let $x \not \in \widehat{Z}(G)$.  Using Lemma \ref{l:3}, 
\[\widehat{\mathrm{d}}(G)=\int_G \mu (\widehat{C_G}(x)) d\mu (x)\]
\[=\mu(\widehat{Z}(G))+\int_{Z(G)-\widehat{Z}(G)} \mu (\widehat{C_G}(x)) d\mu (x)+\int_{G-Z(G)} \mu (\widehat{C_G}(x)) d\mu (x)\]
but the monotonicity of $\mu$ implies $ \mu (\widehat{C_G}(x)) \le  \mu(C_G(x))$ and  Lemma \ref{l:4} implies $\mu(\widehat{C_G}(x))= |G: \widehat{C_G}(x)|^{-1} \leq \frac{1}{p}$, thus 
\[\leq\mu(\widehat{Z}(G))+\frac{1}{p}\left(\mu(Z(G))-\mu(\widehat{Z}(G))\right)+\int_{G-Z(G)} \mu (C_G(x)) d\mu (x)\]
\[=\mu(\widehat{Z}(G))+\frac{1}{p}\left(\mu(Z(G))-\mu(\widehat{Z}(G))\right)+\mathrm{d}(G)-\mu(Z(G))\]
\[=\mathrm{d}(G)-\left(\frac{p-1}{p}\right)\left(\mu(Z(G))-\mu(\widehat{Z}(G))\right).\]
For the lower bound,  Lemmas \ref{l:3}, \ref{l:4} and Proposition \ref{p:1} imply 
\[\widehat{\mathrm{d}}(G)=\int_G \mu (\widehat{C_G}(x)) d\mu (x)=\mu(\widehat{Z}(G))+\int_{G-\widehat{Z}(G)} \frac{\mu (C_G(x))}{|C_G(x):\widehat{C_G}(x)|} d\mu (x)\]
\[ \ge \mu(\widehat{Z}(G))+\frac{1}{|H_2(G,\mathbb{Z}_p)|}\int_{G-\widehat{Z}(G)} \mu (C_G(x))d\mu (x)\]
\[=\mu(\widehat{Z}(G))+\frac{1}{|H_2(G,\mathbb{Z}_p)|}\left(\int_{G} \mu (C_G(x))d\mu (x)-\mu(\widehat{Z}(G))\right)\]
\[=\mu(\widehat{Z}(G))+\frac{1}{|H_2(G,\mathbb{Z}_p)|}\left(\mathrm{d}(G)-\mu(\widehat{Z}(G))\right).\]
\end{proof}

Theorem \ref{fundamental} allows us to conclude another interesting inequality. 

\begin{corollary}\label{unidegree} A pro--$p$--group $G$ satisfies always the inequality
$\widehat{\mathrm{d}}(G)\leq \mathrm{d}(G)$. In particular,   $\widehat{\mathrm{d}}(G)=\mathrm{d}(G)$  implies $\widehat{Z}(G)=Z(G)$.
\end{corollary}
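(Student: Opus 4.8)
The plan is to read off both assertions directly from Theorem \ref{fundamental}, treating the two cases according to whether $H_2(G,\mathbb{Z}_p)$ is finite or not. The inequality $\widehat{\mathrm{d}}(G)\le \mathrm{d}(G)$ should be an immediate consequence of the upper bound already established: since $\mu$ is monotone and $\widehat{Z}(G)\le Z(G)$ by Lemma \ref{l:2}(ii), we have $\mu(Z(G))-\mu(\widehat{Z}(G))\ge 0$, and since $p\ge 2$ the factor $\frac{p-1}{p}$ is nonnegative, so the subtracted quantity in the displayed bound of Theorem \ref{fundamental} is $\ge 0$. Hence $\widehat{\mathrm{d}}(G)\le \mathrm{d}(G)$ whenever that upper bound applies, i.e. for every pro--$p$--group $G$.

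For the equality clause, I would argue as follows. Suppose $\widehat{\mathrm{d}}(G)=\mathrm{d}(G)$. Then the inequality in Theorem \ref{fundamental} forces $\left(\frac{p-1}{p}\right)\left(\mu(Z(G))-\mu(\widehat{Z}(G))\right)\le 0$; since both factors are nonnegative and $\frac{p-1}{p}>0$, we get $\mu(Z(G))=\mu(\widehat{Z}(G))$. Now $\widehat{Z}(G)$ is a closed subgroup of $Z(G)$ (Lemma \ref{l:2}(ii)), hence of $G$; by Lemma \ref{l:4} its index in $G$ is either a power of $p$ or infinite, with $\mu(\widehat{Z}(G))=0$ in the latter case. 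If $\mu(Z(G))=\mu(\widehat{Z}(G))>0$, then both are finite-index closed subgroups, and from $\mu(\widehat{Z}(G))=1/|G:\widehat{Z}(G)|$ and $\mu(Z(G))=1/|G:Z(G)|$ together with $\widehat{Z}(G)\le Z(G)$ we deduce $|G:\widehat{Z}(G)|=|G:Z(G)|$, whence $\widehat{Z}(G)=Z(G)$. If instead $\mu(Z(G))=\mu(\widehat{Z}(G))=0$, then $Z(G)$ has infinite index and a more careful argument is needed: here I would return to the integral identity of Lemma \ref{l:3} and examine where equality can hold in the chain of estimates in the proof of Theorem \ref{fundamental} — specifically the step $\mu(\widehat{C_G}(x))\le\mu(C_G(x))$ on $Z(G)-\widehat{Z}(G)$ and on $G-Z(G)$ — to conclude that $\mu(\widehat{C_G}(x))=\mu(C_G(x))$ almost everywhere, and then that $\widehat{C_G}(x)=C_G(x)$ for almost every $x$, which should propagate to $\widehat{Z}(G)=\bigcap_g \widehat{C_G}(g)=\bigcap_g C_G(g)=Z(G)$ using Lemma \ref{l:2}(ii).

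I expect the main obstacle to be precisely the measure-zero case $\mu(Z(G))=0$: the bare inequality $\widehat{\mathrm{d}}(G)\le\mathrm{d}(G)$ degenerates there (both sides may be forced to match without any a priori control on the subgroups), so the equality statement cannot be extracted from the displayed bound alone and one must revisit the proof of Theorem \ref{fundamental} to track the equality conditions in each estimate. A cleaner route, which I would prefer if it works, is to observe that $\widehat{C_G}(x)\le C_G(x)$ always (as noted after Lemma \ref{l:1}), so $\mu(\widehat{C_G}(x))\le\mu(C_G(x))$ pointwise, and therefore by Lemma \ref{l:3} applied to both $\widehat{\mathrm{d}}$ and $\mathrm{d}$ one has $\mathrm{d}(G)-\widehat{\mathrm{d}}(G)=\int_G\bigl(\mu(C_G(x))-\mu(\widehat{C_G}(x))\bigr)\,d\mu(x)$ with a nonnegative integrand; equality then gives $\mu(C_G(x))=\mu(\widehat{C_G}(x))$ for $\mu$--almost all $x$, hence $|C_G(x):\widehat{C_G}(x)|=1$ for such $x$ (the index being finite by Proposition \ref{p:1} when $H_2(G,\mathbb{Z}_p)$ is finite, and in general one uses that a closed subgroup of the same positive measure coincides with the ambient group, handling the measure-zero situation by a density argument). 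From $\widehat{C_G}(x)=C_G(x)$ for a dense set of $x$ and the closedness statements in Lemma \ref{l:2} one concludes $\widehat{Z}(G)=Z(G)$, completing the proof.
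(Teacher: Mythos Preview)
Your derivation of the inequality $\widehat{\mathrm d}(G)\le \mathrm d(G)$ is exactly the paper's: the corollary is stated without proof, introduced as a direct consequence of Theorem~\ref{fundamental}, and the only thing to observe is that the subtracted term in the upper bound is nonnegative. Your treatment of the equality clause in the case $\mu(Z(G))>0$ (deduce $\mu(Z(G))=\mu(\widehat Z(G))$ and pass to indices) is also what the paper has in mind.

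Where you go beyond the paper is in isolating the case $\mu(Z(G))=0$, and here your proposal does not close the gap. From $\int_G\bigl(\mu(C_G(x))-\mu(\widehat{C_G}(x))\bigr)\,d\mu(x)=0$ you only get $\mu(C_G(x))=\mu(\widehat{C_G}(x))$ for $\mu$--a.e.\ $x$; when $\mu(C_G(x))=0$ this says nothing about the index $|C_G(x):\widehat{C_G}(x)|$, so you cannot conclude $\widehat{C_G}(x)=C_G(x)$ on any set of positive measure, let alone a dense one. Proposition~\ref{p:1} gives finite index only when $H_2(G,\mathbb Z_p)$ is finite, so invoking it in general is illegitimate. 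And even if you had $\widehat{C_G}(x)=C_G(x)$ for all $x$ in a dense set $D$, the step $\bigcap_{g\in G}\widehat{C_G}(g)=\bigcap_{g\in G}C_G(g)$ does not follow: you would need the equality for \emph{every} $g$, and neither $x\mapsto C_G(x)$ nor $x\mapsto \widehat{C_G}(x)$ is shown to vary continuously in a way that lets you pass from $D$ to $G$. The paper simply does not address this case; your instinct that the displayed bound of Theorem~\ref{fundamental} degenerates when $\mu(Z(G))=0$ is correct, but the ``density argument'' you sketch is not a proof.
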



For instance,  $\widehat{\mathrm{d}}(G)= \mathrm{d}(G)$, whenever $H_2(G,\mathbb{Z}_p)$ is trivial. This happens for the infinite pro--2--group (with $r \ge1 $ arbitrary) \[D=\langle a, t \  | \ a^{2^r} = 1, a^{-1}ta = t^{-1} \rangle = \mathbb{Z}_2 \rtimes C_{2^r},\]  described also in \cite[\S 1]{eick2}. Its exterior degree is computed below.

\begin{example}
 It is clear that $Z(D)=\widehat{Z}(D)=1$. For $i=0$, $\mu(\widehat{C_D}(t^i))=1$; for all $i\not=0$, $\mu(\widehat{C_D}(t^i))=1/2^r$; for all $i$ and $1\leq j\leq 2^r-1$, $\mu(\widehat{C_D}(a^jt^i))=0$.  By Lemma \ref{l:3} we find that \[\widehat{\mathrm{d}}(D)=\mu(\widehat{Z}(D))+\int_{\langle t\rangle-\widehat{Z}(D)} \mu(\widehat{C_D}(x)) d\mu(x)+\int_{D-\langle t\rangle}\mu(\widehat{C_D}(x)) d\mu(x)\] \[=\frac{1}{2^r}\mu(\langle t\rangle-\{1\})=\frac{1}{2^r}\mu(\langle t\rangle)=\frac{1}{4^r}.\]
\end{example}

The importance of the  condition $\widehat{\mathrm{d}}(G)= \mathrm{d}(G)= \mathrm{d}^\wedge(G)$  is illustrated in  \cite{n2} in finite case. The following result is an application of Theorem \ref{fundamental} and, at the same time, is a generalization of the corresponding results of \cite{n2} to the infinite case.

\begin{corollary}\label{application} Assume that $G$ is a pro--$p$--group. 
\begin{itemize}
\item[(i)] If $G$ is abelian of $\mathrm{rk}(G)>1$,  then $\widehat{\mathrm{d}}(G)\leq \frac{p^2+p-1}{p^3}$ and the equality holds if and only if $G/\widehat{Z}(G) \simeq C_p \times C_p$.
\item[(ii)] If $G$ is nonabelian and $\widehat{Z}(G)\not=Z(G)$, then $\widehat{\mathrm{d}}(G)\leq \frac{p^3+p-1}{p^4}.$
\end{itemize}
\end{corollary}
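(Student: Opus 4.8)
The plan is to extract both bounds from Theorem~\ref{fundamental} by feeding in the structural information that $G$ provides in each case, and by estimating the relevant Haar measures using Lemma~\ref{l:4}.

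For part (i), $G$ is abelian, so $Z(G)=G$ and $\mathrm{d}(G)=1$. Since $\mathrm{rk}(G)>1$, the quotient $G/\widehat{Z}(G)$ cannot be topologically cyclic (otherwise the monomorphism $\widehat{\xi}$ of Theorem~\ref{t:1}, or a direct argument, would force $x\widehat{\wedge}y=1$ for all $x,y$, i.e. $\widehat{Z}(G)=G$ and $G$ would have rank one after a closer look — I would first nail down this implication precisely, since it is the one genuinely delicate point). Hence $G/\widehat{Z}(G)$ surjects onto a quotient of $p$-rank at least $2$, so $|G:\widehat{Z}(G)|\ge p^2$ and $\mu(\widehat{Z}(G))=|G:\widehat{Z}(G)|^{-1}\le 1/p^2$ by Lemma~\ref{l:4}. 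Plugging $\mathrm{d}(G)=1$ and $\mu(Z(G))=\mu(G)=1$ into the upper bound of Theorem~\ref{fundamental} gives
\[\widehat{\mathrm{d}}(G)\le 1-\Bigl(\frac{p-1}{p}\Bigr)\bigl(1-\mu(\widehat{Z}(G))\bigr)=\frac{1}{p}+\frac{p-1}{p}\,\mu(\widehat{Z}(G))\le \frac{1}{p}+\frac{p-1}{p^3}=\frac{p^2+p-1}{p^3}.\]
For the equality case: equality throughout forces $\mu(\widehat{Z}(G))=1/p^2$, i.e. $|G:\widehat{Z}(G)|=p^2$, so $G/\widehat{Z}(G)$ is an abelian group of order $p^2$; it must be elementary abelian (it cannot be $C_{p^2}$, again because a cyclic quotient collapses $\widehat{Z}$), hence $G/\widehat{Z}(G)\simeq C_p\times C_p$. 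Conversely, if $G/\widehat{Z}(G)\simeq C_p\times C_p$ one computes $\widehat{\mathrm{d}}(G)$ directly from Lemma~\ref{l:3}: on the coset $\widehat{Z}(G)$ the integrand is $1$, while on each nontrivial coset $\widehat{C_G}(x)=\widehat{Z}(G)$ (the quotient being abelian of exponent $p$ and rank $2$, no element outside $\widehat{Z}(G)$ can kill all of $G$ in $G\widehat{\wedge}G$, but every element has $\widehat{C_G}(x)\supseteq\widehat{Z}(G)$ of index $p$ inside $C_G(x)=G$), giving $\mu(\widehat{C_G}(x))=1/p^2$; integrating over the $p^2-1$ nontrivial cosets yields $\widehat{\mathrm{d}}(G)=1/p^2+(p^2-1)\cdot(1/p^2)\cdot(1/p^2)=(p^2+p-1)/p^3$.

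For part (ii), $G$ is nonabelian, so $Z(G)\subsetneq G$ and hence $|G:Z(G)|\ge p^2$ (a group with cyclic central quotient is abelian), giving $\mu(Z(G))\le 1/p^2$. Since $G$ is nonabelian, $\mathrm{d}(G)\le \frac{p^2+p-1}{p^3}$ by the classical bound for the commutativity degree (valid in the pro-$p$ / compact setting; I would cite the infinite-group analogue from \cite{ahmad2, rashid1} already invoked in the paper). Finally $\widehat{Z}(G)\subsetneq Z(G)$, and since $\widehat{Z}(G)$ is a closed subgroup of $Z(G)$ of infinite or $p$-power index, $\mu(\widehat{Z}(G))\le \frac1p\mu(Z(G))\le 1/p^3$. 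Substituting into the upper bound of Theorem~\ref{fundamental},
\[\widehat{\mathrm{d}}(G)\le \mathrm{d}(G)-\Bigl(\frac{p-1}{p}\Bigr)\bigl(\mu(Z(G))-\mu(\widehat{Z}(G))\bigr)\le \mathrm{d}(G)-\Bigl(\frac{p-1}{p}\Bigr)\bigl(\mu(Z(G))-\tfrac1p\mu(Z(G))\bigr),\]
and bounding $\mathrm{d}(G)\le(p^2+p-1)/p^3$ together with $\mu(Z(G))\le 1/p^2$ (keeping the $-(p-1)^2/p^2\cdot\mu(Z(G))$ term, which is most negative when $\mu(Z(G))$ is largest, hence $=1/p^2$) yields $\widehat{\mathrm{d}}(G)\le \frac{p^2+p-1}{p^3}-\frac{(p-1)^2}{p^4}=\frac{p^3+p-1}{p^4}$ after simplification.

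The main obstacle I anticipate is not the measure arithmetic but justifying rigorously the recurring claim that a (topologically) cyclic quotient $G/\widehat{Z}(G)$ forces $\widehat{Z}(G)=G$, and more generally pinning down exactly when $\widehat{C_G}(x)$ coincides with $Z(G)$ versus $\widehat{Z}(G)$ for $x$ outside the exterior center; this is where one must use the bilinearity of $\widehat{f}$ and Lemma~\ref{l:1} carefully, and it is also the step that controls the equality case in (i). The secondary point requiring care is confirming that the finite-case estimate $\mathrm{d}(G)\le(p^2+p-1)/p^3$ for nonabelian $p$-groups transfers verbatim to pro-$p$-groups with the Haar measure — this should follow from the class-equation argument $\mathrm{d}(G)=\int_G\mu(C_G(x))\,d\mu(x)$ exactly as in the finite case, splitting off $Z(G)$ and bounding $\mu(C_G(x))\le 1/p$ for $x\notin Z(G)$.
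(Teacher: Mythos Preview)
Your overall strategy---feed the measure bounds $\mu(Z(G))$, $\mu(\widehat{Z}(G))$ into the upper bound of Theorem~\ref{fundamental}---is exactly what the paper does, and your sketch of the inequality in (i) matches the paper's line for line. Two points, however, are genuinely wrong rather than merely sketchy.

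\textbf{Error in the equality computation of (i).} You claim that for $x\notin\widehat{Z}(G)$ one has $\widehat{C_G}(x)=\widehat{Z}(G)$, hence $\mu(\widehat{C_G}(x))=1/p^{2}$. This is impossible: $x\widehat{\wedge}x=1$ always, so $x\in\widehat{C_G}(x)$ and therefore $\widehat{C_G}(x)\supsetneq\widehat{Z}(G)$. With $|G:\widehat{Z}(G)|=p^{2}$ this forces $|G:\widehat{C_G}(x)|=p$, i.e.\ $\mu(\widehat{C_G}(x))=1/p$. Your displayed arithmetic confirms the slip: $1/p^{2}+(p^{2}-1)/p^{4}=(2p^{2}-1)/p^{4}$, which is \emph{not} $(p^{2}+p-1)/p^{3}$. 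With the correct value $1/p$ the integral gives $\mu(\widehat{Z}(G))+\tfrac{1}{p}\bigl(1-\mu(\widehat{Z}(G))\bigr)=\tfrac{1}{p}+\tfrac{p-1}{p}\cdot\tfrac{1}{p^{2}}=(p^{2}+p-1)/p^{3}$, as in the paper. Incidentally, the paper does not go through the ``cyclic quotient'' issue you flag: to get $\mu(\widehat{Z}(G))\le 1/p^{2}$ it simply observes, for any $x\notin\widehat{Z}(G)$, that $|G:\widehat{C_G}(x)|\ge p$ and $|\widehat{C_G}(x):\widehat{Z}(G)|\ge p$ (the latter again because $x\in\widehat{C_G}(x)\setminus\widehat{Z}(G)$).

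\textbf{Logical gap in (ii).} You correctly reach $\widehat{\mathrm{d}}(G)\le \mathrm{d}(G)-\dfrac{(p-1)^{2}}{p^{2}}\,\mu(Z(G))$, but then you insert the \emph{upper} bound $\mu(Z(G))\le 1/p^{2}$ into a term with a \emph{minus} sign; that inequality runs the wrong way and does not yield an upper bound. From $\mathrm{d}(G)\le(p^{2}+p-1)/p^{3}$ and $\mu(Z(G))\le 1/p^{2}$ alone you can only conclude $\widehat{\mathrm{d}}(G)\le (p^{2}+p-1)/p^{3}$. The fix is to keep $\mathrm{d}(G)$ and $\mu(Z(G))$ coupled: use $\mathrm{d}(G)\le \tfrac{1}{p}+\tfrac{p-1}{p}\mu(Z(G))$ (the pro-$p$ class-equation estimate you already mention), whence
\[
\widehat{\mathrm{d}}(G)\le \frac{1}{p}+\frac{p-1}{p}\mu(Z(G))-\frac{p-1}{p}\bigl(\mu(Z(G))-\mu(\widehat{Z}(G))\bigr)=\frac{1}{p}+\frac{p-1}{p}\,\mu(\widehat{Z}(G))\le \frac{1}{p}+\frac{p-1}{p^{4}}=\frac{p^{3}+p-1}{p^{4}},
\]
using only $\mu(\widehat{Z}(G))\le 1/p^{3}$. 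The paper's proof of (ii) is terse (``again an application of Theorem~\ref{fundamental}'') but this is the computation it intends.
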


\begin{proof} (i). Abelian pro--$p$--groups of $\mathrm{rk}(G)=1$ are procyclic pro--$p$--groups and they are either isomorphic to $C_{p^k}$ (for some $k\ge1$) or to $\mathbb{Z}_p$. Both of them have $\widehat{Z}(G)=Z(G)=G$ by Lemma \ref{l:5}. Let $\widehat{Z}(G)\not=Z(G)=G$ and  $\mathrm{d}(G)=1$.
For all $x\not\in \widehat{Z}(G)$, arguing as in Theorem \ref{fundamental}, we find that $|G:\widehat{C_G}(x)|\geq p$, $|\widehat{C_G}(x):\widehat{Z}(G)|\geq p$ and $\mu(\widehat{Z}(G))\leq 1/p^2$. From Theorem \ref{fundamental},  we have
\[\widehat{\mathrm{d}}(G)\leq 1-\left(\frac{p-1}{p}\right)\left(1-\frac{1}{p^2}\right)=\frac{p^2+p-1}{p^3}.\]
Now assume $G/\widehat{Z}(G)\cong C_p \times C_p$. From Lemma \ref{l:3}, we get 
\[\widehat{\mathrm{d}}(G)=\int_G \mu (\widehat{C_G}(x)) d\mu (x)=\mu(\widehat{Z}(G))+\int_{G-\widehat{Z}(G)} \mu (\widehat{C_G}(x)) d\mu (x)\]
\[=\frac{1}{p}+\mu(\widehat{Z}(G)) \Big(1-\frac{1}{p}\Big)=\frac{p^2+p-1}{p^3}.\]
 Conversely, 
\[\frac{p^2+p-1}{p^3}=\widehat{\mathrm{d}}(G)=\int_G \mu (\widehat{C_G}(x)) d\mu (x)\leq\frac{1}{p}+\mu(\widehat{Z}(G))\Big(1-\frac{1}{p}\Big)\]
implies $\mu(\widehat{Z}(G))\geq 1/p^2$ and then $\mu(\widehat{Z}(G))=1/p^2$. We conclude necessarily that $G/\widehat{Z}(G)\cong C_p\times C_p.$

(ii). Assume that $G$ is nonabelian and $\widehat{Z}(G)\not=Z(G)$. Then we may argue as (i) above, getting $\mathrm{d}(G)\leq (p^2+p-1)/p^3$, $\mu(G)\leq 1/p^2$ and $\mu(\widehat{Z}(G))\leq 1/p^3$. Again an application of Theorem \ref{fundamental} allows us to conclude the proof.
\end{proof}

We end with two theorems which describe the exterior degree for quotients and direct products. The first case is the following.

\begin{theorem}\label{qut}
Let $N$ be a closed normal subgroup of a pro--$p$--group $G$. Then \[\widehat{\mathrm{d}}(G)\leq \widehat{\mathrm{d}}(G/N)\] and the equality holds if $N \le  \widehat{Z}(G)$.
\end{theorem}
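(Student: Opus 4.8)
The plan is to transport everything along the canonical projection $\pi\colon G\to G/N$ (a continuous surjective homomorphism of compact groups, so $G/N$ is again a pro-$p$-group) and the induced epimorphism of exterior squares $\widehat{\pi}\colon G\widehat{\wedge}G\to G/N\widehat{\wedge}G/N$, $x\widehat{\wedge}y\mapsto xN\widehat{\wedge}yN$, already used in the proof of Lemma \ref{l:2}(iii). Write $\widehat{f}\colon G\times G\to G\widehat{\wedge}G$ and $\widehat{f}_{G/N}\colon G/N\times G/N\to G/N\widehat{\wedge}G/N$ for the two crossing pairings and set $C=\widehat{f}^{-1}(1_{G\widehat{\wedge}G})$, $C'=\widehat{f}_{G/N}^{-1}(1_{G/N\widehat{\wedge}G/N})$. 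Since $\widehat{\pi}\circ\widehat{f}$ and $\widehat{f}_{G/N}\circ(\pi\times\pi)$ both send $(x,y)$ to $xN\widehat{\wedge}yN$, any $(x,y)\in C$ satisfies $(\pi(x),\pi(y))\in C'$, that is, $C\subseteq(\pi\times\pi)^{-1}(C')$. Now $\pi$ carries the normalized Haar measure $\mu$ of $G$ onto the normalized Haar measure of $G/N$ (the image measure is a left--invariant probability measure by surjectivity of $\pi$, hence coincides with it by uniqueness), and likewise $\pi\times\pi$ carries $\mu\times\mu$ onto the normalized Haar measure of $G/N\times G/N$; therefore
\[\widehat{\mathrm{d}}(G)=(\mu\times\mu)(C)\le(\mu\times\mu)\bigl((\pi\times\pi)^{-1}(C')\bigr)=\widehat{\mathrm{d}}(G/N).\]

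For the equality, suppose $N\le\widehat{Z}(G)$. By Lemma \ref{l:2}(ii) we have $\widehat{Z}(G)=\bigcap_{g\in G}\widehat{C_G}(g)$, so $n\widehat{\wedge}g=1_{G\widehat{\wedge}G}$ for all $n\in N$ and $g\in G$. As for the classical nonabelian exterior square (see \cite{brown,n1}), $\ker\widehat{\pi}$ is the closed normal subgroup of $G\widehat{\wedge}G$ topologically generated by these elements $n\widehat{\wedge}g$; in the complete setting this is obtained from Theorem \ref{primoz} by reducing to the finite quotients $G/M$, $M\in\mathcal{P}(G)$, where the statement is classical, and then passing to the projective limit. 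Hence $\ker\widehat{\pi}=1$, so $\widehat{\pi}$ is an isomorphism of pro-$p$-groups and $x\widehat{\wedge}y=1_{G\widehat{\wedge}G}$ holds exactly when $xN\widehat{\wedge}yN=1_{G/N\widehat{\wedge}G/N}$. Consequently $C=(\pi\times\pi)^{-1}(C')$, and the measure computation above becomes an equality: $\widehat{\mathrm{d}}(G)=\widehat{\mathrm{d}}(G/N)$.

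The commutation identity $\widehat{\pi}\circ\widehat{f}=\widehat{f}_{G/N}\circ(\pi\times\pi)$ and the transport of Haar measure under $\pi$ are routine; the one genuine obstacle is the identification of $\ker\widehat{\pi}$ in the pro-$p$ setting, i.e.\ checking that the kernel of the map induced on exterior squares by $G\twoheadrightarrow G/N$ is still topologically generated, as a normal subgroup, exactly by the $n\widehat{\wedge}g$. I would handle this by the reduction to finite quotients via Theorem \ref{primoz} indicated above. Alternatively, the equality can be phrased through centralizers: when $N\le\widehat{Z}(G)$ one has $N\le\widehat{C_G}(x)$ for each $x$, Lemma \ref{l:2}(iii) embeds $\widehat{C_G}(x)/N$ into $\widehat{C_{G/N}}(xN)$, injectivity of $\widehat{\pi}$ promotes this to an equality (with $|G/N:\widehat{C_G}(x)/N|=|G:\widehat{C_G}(x)|$), and then Lemma \ref{l:3} together with the pushforward of $\mu$ under $\pi$ gives $\widehat{\mathrm{d}}(G)=\widehat{\mathrm{d}}(G/N)$ once more.
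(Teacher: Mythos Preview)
Your argument is correct but takes a different route from the paper. The paper works through the centralizer formula of Lemma~\ref{l:3}: it bounds $\mu(\widehat{C_G}(x))\le\mu(\widehat{C_G}(x)N)$, invokes the Extended Weil Formula to rewrite $\int_G(\cdot)\,d\mu$ as a double integral over $G/N\times N$, then uses $\mu(\widehat{C_G}(xn)N)=\nu(\widehat{C_G}(xn)N/N)\le\nu(\widehat{C_{G/N}}(xN))$ and integrates out the $N$-fibre. You instead go straight to the definition $\widehat{\mathrm{d}}(G)=(\mu\times\mu)(C)$, observe the set-theoretic inclusion $C\subseteq(\pi\times\pi)^{-1}(C')$, and push Haar measure forward along $\pi\times\pi$. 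Your path is shorter and avoids Weil's formula entirely; the paper's approach has the virtue of making the role of the exterior centralizers explicit and matches the finite-group computations in \cite{n2} more visibly. For the equality clause both proofs hinge on the same fact---that the canonical map $G\widehat{\wedge}G\to G/N\widehat{\wedge}G/N$ is an isomorphism when $N\le\widehat{Z}(G)$---which the paper simply asserts, while you sketch a justification via Theorem~\ref{primoz} and reduction to finite quotients; that extra care is welcome, since the identification of $\ker\widehat{\pi}$ in the complete setting is indeed the only nontrivial point.
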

\begin{proof}
Assume that $\lambda$, $\mu$ and $\nu$ are corresponding Haar measure of $N$, $G$ and $G/N$ respectively. The Extended Weil's Formula (see for instance \cite[Equation $(*)$, p. 126]{rashid2}), implies 
\[\widehat{\mathrm{d}}(G)=\int_G \mu (\widehat{C_G}(x)) d\mu (x)\leq\int_G \mu (\widehat{C_G}(x)N)d\mu (x)\]
\[=\int_{\frac{G}{N}}\int_{N} \mu (\widehat{C_G}(xn)N)d\lambda(n)d\nu(xN).\]
On the other hand, one can see  without difficulties that
\[\mu(\widehat{C_G}(xn)N)=\nu(\widehat{C_G}(xn)N/N)\leq\nu(\widehat{C_{G/N}}(xN))\]
therefore,
\[\widehat{\mathrm{d}}(G)\leq\int_{\frac{G}{N}}\int_{N}\nu(\widehat{C_{G/N}}(xN))d\lambda(n)d\nu(xN)\]
\[=\int_{\frac{G}{N}}\nu(\widehat{C_{G/N}}(xN))\left(\int_{N}d\lambda(n)\right)d\nu(xN)=\int_{\frac{G}{N}}\nu(\widehat{C_{G/N}}(xN))d\nu(xN)\]
\[=\widehat{\mathrm{d}}(G/N).\]
Now assume that $N$ is contained in $\widehat{Z}(G)$, then the canonical map $G\widehat{\wedge}G \rightarrow G/N\widehat{\wedge}G/N $ will be an isomorphism. Therefore $\widehat{C_G}(xn)N/N=\widehat{C_{G/N}}(xN)$ for all $x\in G$ and all inequalities should be changed into equalities and so
$\widehat{\mathrm{d}}(G)=\widehat{\mathrm{d}}(G/N).$ 
\end{proof}

The second case describes a theorem of splitting of probabilities.

\begin{theorem}\label{product}
Let $G$ be a pro--$p$--group  and $H$ be a pro--$q$--group for two different primes $p$ and $q$. Then \[\widehat{\mathrm{d}}(G\times H)=\widehat{\mathrm{d}}(G) \ \ \widehat{\mathrm{d}}(H).\]
\end{theorem}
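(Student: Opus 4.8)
The plan is to reduce the statement to the behaviour of the operator $\widehat{\wedge}$ on a direct product of groups of coprime order, and then to a Fubini-type computation over $(G\times H)\times(G\times H)$. First I would establish the key structural fact: if $G$ is a pro-$p$-group and $H$ a pro-$q$-group with $p\neq q$, then $(G\times H)\widehat{\wedge}(G\times H)\cong (G\widehat{\wedge}G)\times(H\widehat{\wedge}H)$, with the mixed terms $G\widehat{\wedge}H$ being trivial. In the finite case this is classical for the nonabelian exterior (and tensor) square of a direct product (the cross-term $G\wedge H$ is a quotient of $G^{\mathrm{ab}}\otimes_{\mathbb Z} H^{\mathrm{ab}}$, which vanishes when $\gcd(|G^{\mathrm{ab}}|,|H^{\mathrm{ab}}|)=1$); here one passes to the limit using Theorem \ref{primoz}, which commutes $\widehat{\wedge}$ with projective limits, together with the fact that a direct product of the defining inverse systems for $G$ and $H$ cofinally computes $\mathcal P(G\times H)$. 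Concretely: $\mathcal{P}(G\times H)$ has a cofinal subfamily of the form $N\times M$ with $N\in\mathcal{P}(G)$, $M\in\mathcal{P}(H)$ (since $G/N'$ is a finite $p$-group and $H/M'$ a finite $q$-group, any normal subgroup of $p$-power-times-$q$-power index of $G\times H$ contains such a product up to refinement), so $(G\times H)\widehat{\wedge}(G\times H)=\lim (G/N\times H/M)\wedge(G/N\times H/M)=\lim\big((G/N\wedge G/N)\times(H/M\wedge H/M)\big)=(G\widehat{\wedge}G)\times(H\widehat{\wedge}H)$.

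Granting this, for $g_1,g_2\in G$ and $h_1,h_2\in H$ one gets $(g_1,h_1)\widehat{\wedge}(g_2,h_2)=\big(g_1\widehat{\wedge}g_2,\,h_1\widehat{\wedge}h_2\big)$ under the identification above, so $(g_1,h_1)\widehat{\wedge}(g_2,h_2)=1$ if and only if $g_1\widehat{\wedge}g_2=1$ and $h_1\widehat{\wedge}h_2=1$. Hence the set $C_{G\times H}\subseteq (G\times H)\times(G\times H)$ of pairs with trivial wedge is, after rearranging coordinates, exactly $C_G\times C_H$, where $C_G\subseteq G\times G$ and $C_H\subseteq H\times H$ are the corresponding sets for $G$ and $H$. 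Writing $\mu_G,\mu_H$ for the normalized Haar measures, the normalized Haar measure on $G\times H$ is $\mu_G\times\mu_H$ by uniqueness, and likewise the normalized Haar measure on $(G\times H)^2$ is the fourfold product; applying Fubini--Tonelli to split the integral of $\chi_{C_{G\times H}}=\chi_{C_G}\cdot\chi_{C_H}$ gives $\widehat{\mathrm{d}}(G\times H)=(\mu_G\times\mu_G)(C_G)\cdot(\mu_H\times\mu_H)(C_H)=\widehat{\mathrm{d}}(G)\,\widehat{\mathrm{d}}(H)$, as claimed. Alternatively, and perhaps more cleanly, one can run the argument through Lemma \ref{l:3}: by part (v)-type reasoning one shows $\widehat{C_{G\times H}}\big((g,h)\big)=\widehat{C_G}(g)\times\widehat{C_H}(h)$ (the coprimality forces the cross-centralizers to be everything), whence $\mu(\widehat{C_{G\times H}}(g,h))=\mu_G(\widehat{C_G}(g))\,\mu_H(\widehat{C_H}(h))$, and integrating this product over $G\times H$ with respect to $\mu_G\times\mu_H$ and invoking Fubini gives the result directly from Lemma \ref{l:3}.

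I expect the main obstacle to be the first step: making rigorous that $(G\times H)\widehat{\wedge}(G\times H)$ decomposes as the product of the individual complete exterior squares, i.e. that the complete nonabelian exterior (and tensor) square commutes with a direct product of coprime pro-groups. This requires (a) the finite-level decomposition $(A\times B)\wedge(A\times B)\cong (A\wedge A)\times(B\wedge B)$ with vanishing cross-term when $|A^{\mathrm{ab}}|$ and $|B^{\mathrm{ab}}|$ are coprime — a known fact about nonabelian tensor/exterior squares of direct products (cf. the formulas of Brown--Johnson--Robertson in \cite{brown}) — and (b) a cofinality argument identifying $\mathcal P(G\times H)$ with products $\mathcal P(G)\times\mathcal P(H)$ so that Theorem \ref{primoz} applies. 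Step (b) is where the hypothesis $p\neq q$ is essential: it guarantees that the finite quotients of $G\times H$ that are $p$-groups (resp. $q$-groups) see only the $G$-factor (resp. the $H$-factor), so the inverse system factors. Once this limit computation is in place, everything else is a routine application of Fubini--Tonelli and the uniqueness of normalized Haar measure on a product, exactly parallel to Lemma \ref{l:3} and to the splitting results for the commutativity degree cited in \cite{rashid1, rashid2}.
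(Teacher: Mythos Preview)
Your alternative route---using Lemma \ref{l:3} together with the factorisation $\widehat{C_{G\times H}}\bigl((g,h)\bigr)=\widehat{C_G}(g)\times\widehat{C_H}(h)$ and then Fubini---is precisely the paper's proof. The paper simply asserts that centralizer factorisation and integrates.

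Your first, more elaborate route (proving $(G\times H)\widehat{\wedge}(G\times H)\cong (G\widehat{\wedge}G)\times(H\widehat{\wedge}H)$ via the finite Brown--Johnson--Robertson decomposition, vanishing of the cross-term under coprimality, and a cofinality argument to pass to the limit through Theorem \ref{primoz}) is not in the paper, but it is exactly what one needs to \emph{justify} the centralizer factorisation that the paper takes for granted. Note that Lemma \ref{l:2}\,(v) as stated applies to two closed subgroups of a single pro-$p$-group, whereas here $G\times H$ is not a pro-$p$-group at all; so your limit argument, with the coprimality hypothesis $p\neq q$ forcing the cross-terms $G\widehat{\wedge}H$ to vanish, is a genuine addition and makes the proof self-contained. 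In short: both of your routes are correct, the second coincides with the paper's argument, and the first supplies the structural lemma the paper uses without proof.
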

\begin{proof}
Assume that $\mu$ and $\nu$ are the Haar measure on $G$ and $H$ respectively. Then $\lambda=\mu\times\nu$ will be the Haar measure on $G\times H$. Forevermore, for all $(x,y)\in G\times H$, we have $\widehat{C_{G\times H}}((x,y))=\widehat{C_G}(x)\times\widehat{C_H}(y)$, then
\[\widehat{\mathrm{d}}(G\times H)=\int_{G\times H} \lambda(\widehat{C_{G\times H}}((x,y))) d\lambda ((x,y))\]
\[=\int_G\int_H \mu(\widehat{C_G}(x))\nu(\widehat{C_H}(y))d\mu(x)d\nu(y)\]
\[\int_G\mu(\widehat{C_G}(x))d\mu(x)\int_H \nu(\widehat{C_H}(y))d\nu(y)=\widehat{\mathrm{d}}(G) \ \ \widehat{\mathrm{d}}(H).\]
\end{proof}


\begin{thebibliography}{99}


\bibitem{bfs} F.R. Beyl, U. Felgner and P. Schmid, On groups occurring as center factor groups, \textit{J. Algebra}  \textbf{61} (1979), 161--177.

\bibitem{brown} R. Brown, D.L. Johnson and E.F. Robertson, Some computations of non--abelian tensor products of groups, \textit{ J. Algebra} \textbf{111} (1987), 177--202.





\bibitem{eick2} B. Eick, Schur multiplicators of infinite pro--$p$--groups with finite coclass, {\it Israel J. Math.} \textbf{166} (2008), 147--156.





\bibitem{ahmad1}  A. Erfanian,  P. Lescot and  R. Rezaei, On the relative commutativity degree of a subgroup of a finite group, \textit{Comm. Algebra} \textbf{35} (2007), 4183--4197.

\bibitem{ahmad2}A. Erfanian and F.G. Russo, Probability of mutually commuting n-tuples in some classes of compact groups, \textit{Bull. Iranian Math. Soc.} \textbf{24} (2008), 27--37.


\bibitem{ahmad3}A. Erfanian and F.G. Russo, \textit{Miscellaneous Topics in Group Theory: Mashhad--Naples, 2005--2008}, Aracne, 2009, Rome.


\bibitem{ahmad4}  A. Erfanian and R. Rezaei, On the commutativity degree of compact groups, \textit{Arch. Math. (Basel)}  \textbf{93} (2009), 345--356. 


\bibitem{ahmad5}A. Erfanian, F.G. Russo and N.H. Sarmin, Some considerations on the nonabelian tensor square of crystallographic groups, \textit{Asian--Eur. J. Math.} \textbf{4} (2011), 271--282.


\bibitem{ahmad6} A. Erfanian, P. Niroomand and R. Rezaei, \textit{On the multiple exterior degree of finite groups}, eprint, Cornell University Library, arXiv:1108.1303, 2011.







\bibitem{charles} C.R. Leedham--Green and S. McKay, \textit{The Structure of Groups of Prime Power Order}, Oxford University Press, Oxford, 2002.






\bibitem{moravec}  P. Moravec,  On the Schur multipliers of finite $p$--groups of given coclass,   \textit{ Israel J. Math.}  \textbf{185} (2011), 189--205.

\bibitem{n1}P. Niroomand and F.G. Russo, A note on the exterior centralizer, \textit{Arch. Math. (Basel)} \textbf{93} (2009), 505--512. 

\bibitem{n2} P. Niroomand and R. Rezaei, On the exterior degree of finite groups, \textit{Comm. Algebra} \textbf{39} (2011), 335--343. 

\bibitem{n3}  P. Niroomand and R. Rezaei, \textit{The exterior degree of a pair of finite groups}, eprint, Cornell University Library, arxiv: 1101.4312, 2011.

\bibitem{n4}P. Niroomand and M. Parvizi , On the structure of groups whose exterior or tensor square is a $p$--group, \textit{J. Algebra} \textbf{352} (2012), 347--353.

\bibitem{n5} P. Niroomand, R. Rezaei and F.G. Russo,  Commuting powers and exterior degree of finite groups, \textit{J. Korean Math. Soc.} \textbf{49} (2012), 855--865.

\bibitem{n6} P. Niroomand, Some results on the exterior degree of extra--special group,  \textit{Ars Combinatoria} (2012), to appear.


\bibitem{rashid1}R. Rezaei and F.G. Russo, Bounds for the relative $n$-th nilpotency degree in compact groups, \textit{Asian--Eur. J. Math.} \textbf{4} (2011), 495--506. 

\bibitem{rashid2}R. Rezaei and F.G. Russo, $n$-th relative nilpotency degree and relative $n$-isoclinism classes, \textit{Carpathian J. Math.} \textbf{27} (2011), 123--130.




\end{thebibliography}
\end{document}